\documentclass{article}
\usepackage[utf8]{inputenc}
\usepackage{fullpage}

\usepackage{amsmath,amsthm,amssymb}
\usepackage{mathtools}
\usepackage{tikz}
\usepackage{hyperref,cleveref}
\usepackage{todonotes}

\newtheorem{theorem}{Theorem}[section]
\newtheorem{corollary}[theorem]{Corollary}
\newtheorem{lemma}[theorem]{Lemma}

\newtheorem{proposition}[theorem]{Proposition}
\newtheorem{definition}[theorem]{Definition}

\usepackage{xspace}

\begin{document}

\title{Coloring t-perfect graphs with fewer colors}
\author{Matija Novakovi\'c \and Stefan Weltge}
\date{\today}

\maketitle

\begin{abstract}
    Recently, Chudnovsky, Cook, Davies, Oum, and Tan obtained the first finite bound on the chromatic number of t-perfect graphs, showing that they are 199053-colorable.
    We improve this bound to 186 by refining their proof.

    The original proof establishes that every graph with large odd girth and large chromatic number contains a certain structure called an $r$-arithmetic rope, and that its existence in a certain leveling of a graph with large odd girth would imply an odd wheel as a t-minor, a known obstruction of t-perfectness.
    While their technique requires a lower bound on the chromatic number that is exponential in $r$, we show that the existence of an $r$-arithmetic rope can already be guaranteed under a linear bound.
    Using a slightly weakened notion of arithmetic ropes allows us to reduce the bound even further.
\end{abstract}

\section{Introduction}

The stable set problem asks for finding a maximum stable set in a given undirected graph, i.e., a largest set of pairwise non-adjacent vertices.
While the problem is NP-hard in general, it can be solved in polynomial time for several classes of graphs.
A common approach to derive efficient algorithms is to exploit the structure of the \emph{stable set polytope}, which is the convex hull of characteristic vectors of stable sets.
Introduced by Chvátal~\cite{chvatal1975certain}, a prominent example is the class of \emph{t-perfect graphs}, which are defined as the graphs $G$ whose stable set polytope coincides with the set of vectors $x \in [0,1]^{V(G)}$ satisfying $x_u + x_v \le 1$ for every edge $uv \in E(G)$ (edge inequalities) and $\sum_{v \in V(C)} x_v \le \frac12 (|V(C)| - 1)$ for every odd cycle $C$ of $G$ (odd cycle inequalities).

In 1992, Shepherd~\cite[8.14]{jensen2011graph} asked whether the stable set polytope $P$ of a t-perfect graph $G$ always has the integer decomposition property, that is, for every positive integer $k$, every integer point in $kP$ is a sum of $k$ integer vectors points in $P$.
Since the all-$\frac{1}{3}$ vector is contained $P$, this would imply that $G$ is 3-colorable.
In 1994, Laurent and Seymour~\cite[p. 1207]{schrijver2003combinatorial} (and later Benchetrit~\cite{benchetrit2015proprietes,benchetrit20164}) found t-perfect graphs with chromatic number 4, negatively answering Shepherd's question.
Since then, it is an open question whether every t-perfect graph is 4-colorable~\cite[8.14]{jensen2011graph}.

Only recently, Chudnovsky, Cook, Davies, Oum, and Tan~\cite{chudnovsky2024colouringtperfectgraphs} gave the first finite bound on the chromatic number of t-perfect graphs, showing that they are 199053-colorable.
While their proof has not been optimized for the best bound, they expected that carrying out their arguments with more care
\begin{quote}
    ``[...] would still leave a large gap between our bound and the lower bound of 4. A good milestone for narrowing the gap would be to improve our upper bound to at most 1000.''
\end{quote}
We show how their proof can be refined to obtain the following bound.

\begin{theorem}\label{thm:main}
    Every t-perfect graph is 186-colorable.
\end{theorem}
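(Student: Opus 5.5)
The proof follows the strategy of Chudnovsky, Cook, Davies, Oum, and Tan, with the middle step replaced by a quantitatively stronger one. Assume for contradiction that $G$ is a t-perfect graph with $\chi(G) > 186$, and that $G$ is minimal under deleting vertices, so it is vertex-critical. Since t-perfection is closed under t-minors, $G$ contains no odd wheel and none of the other known non-t-perfect graphs as a t-minor. Fix a vertex $v$ and take the BFS leveling $L_0=\{v\}, L_1, L_2, \dots$ by distance from $v$. Some level $L_i$ then has $\chi(G[L_i]) \ge \chi(G)/2$, since even and odd levels together cover $V(G)$ and coloring alternate levels separately loses at most a factor of two. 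Next use the absence of short odd cycles with chords, and of odd-$K_4$-type t-minors, to show that inside a level one can pass to a subgraph of large odd girth whose chromatic number is still a constant fraction of $\chi(G)$. The original paper loses only bounded constants here, and these constants should be tracked explicitly.

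\textbf{Key new step (linear rope lemma).} Show that every graph with odd girth at least some fixed $g$ and chromatic number at least $c\cdot r + d$ contains an $r$-arithmetic rope. This replaces the original bound, which is exponential in $r$. The plan is an induction on $r$. Given a graph $H$ with large chromatic number, run a second BFS leveling inside $H$ and pick a level with chromatic number at least $\chi(H)-c'$. This is possible because large odd girth forces the balls of small radius to be bipartite, so a bounded number of levels can be peeled off at bounded cost. Then extend an $(r-1)$-rope found inside this level by one more arithmetic strand through the BFS tree. The reason the cost is additive rather than multiplicative is that one follows a single branch: at each stage only a constant number of colors is discarded, instead of the chromatic number being halved. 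The extension must keep the required path lengths in arithmetic progression, which is where odd girth is used to control parities. I would additionally weaken the rope definition, for instance by dropping the requirement that all consecutive differences be equal and asking only for the parity and distance conditions actually used in the wheel construction. This lowers $c$ further.

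\textbf{Closing the argument.} Feed the rope into the original lemma: an $r$-arithmetic rope, or its weakened version, inside a level of the BFS leveling of a graph with large odd girth yields an odd wheel as a t-minor, for some small fixed $r$. The first task is to recheck that the original proof of this lemma still goes through under the weakened rope notion. Then collect the constants: the factor $2$ from choosing a level, the additive losses from enforcing odd girth, and $c r+d$ with the smallest admissible $r$. These should combine to $\chi(G)\le 186$, contradicting the assumption.

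I expect the main obstacle to be the linear rope lemma, specifically making the inductive extension lose only a bounded number of colors per step while keeping the arithmetic length constraints. The first thing I would try is a "chromatic-number-preserving" connected subgraph argument, in the spirit of Gyárfás path arguments: choose a connected subgraph of near-maximal chromatic number attached to the current end of the rope, and grow the rope inside it. The remaining work, optimizing the constants to reach exactly 186, is bookkeeping.
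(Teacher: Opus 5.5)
Your skeleton is right: reduce to large odd girth, pick a BFS level, find a rope there, apply the odd-wheel lemma. But both load-bearing steps are misidentified.

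\textbf{The odd-girth reduction.} Large odd girth does not come from excluding chorded short odd cycles or odd-$K_4$ t-minors. It also costs an additive constant, not a constant fraction. By Marcus's theorem (\Cref{prop:marcus}), a t-perfect graph has a stable set meeting every shortest odd cycle. Induced subgraphs remain t-perfect, so deleting six such sets gives an induced subgraph $H$ with odd girth at least $15$ and $\chi(H)\ge\chi(G)-6$. This must be done in the whole graph \emph{before} leveling. \Cref{prop:t_minor_in_rope} needs the ambient graph to have odd girth at least $7$ and the rope to lie in $N^t_H(u)$ with distances measured in $H$; repairing odd girth only inside a level of $G$ does not feed into it.

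Also, ``arithmetic'' is about parity, not lengths in arithmetic progression: each segment carries an odd and an even $q_iq_{i+1}$-path. The weakening that pays off is letting every other segment be a single odd path, at the price of requiring distance $6$ between the $q_i$.

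\textbf{The linear rope lemma (the genuine gap).} Choosing a BFS level with $\chi\ge\chi(H)-c'$ does not follow from bipartiteness of small balls. That only shows the union of all far levels has chromatic number at least $\chi(H)-2$. For a single level, the parity argument guarantees only about $\chi(H)/2$, and you offer nothing better. So restricting to a level in each inductive step keeps only half the chromatic number, which is exactly the loss that made the original bound exponential. A Gy\'arf\'as-type path argument does not rescue this: it yields one induced path, not an odd and an even one with the leftover high-chromatic set anticomplete to both.

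The missing idea is never to pass to a single level inside the induction. The paper works as follows.
\begin{itemize}
\item Take BFS layers $M_i$ from the current endpoint $q$ in $C\cup\{q\}$, with a set $B$ covering $C$.
\item Sort the vertices of $C$ far from $q$ into classes $C_1,C_2$ and $C_0^t\subseteq M_t$. A vertex is in $C_1$ (resp.\ $C_2$) if it has a $B$-neighbor reaching $q$ through $\mathcal{M}^{\le t-3}$ at odd (resp.\ even) distance. It is in $C_0^t$ if it has no such neighbor, so the layer index fixes the parity.
\item Order all of $C$ by a stable grading according to the first $m_i$ through which a vertex connects back to $q$.
\item Vertices that are not left-active induce an $8$-colorable graph, because an odd cycle inside one class would contain a directed path of length two (\Cref{lemma1:beyond_leveling,lemma2:beyond_leveling}).
\item Hence, losing only $2+8+1=11$ colors, one gets a connected $C'$ lying entirely after an edge $u'q'$ of a single class, with $u'$ anticomplete to $C'$.
\item That edge gives two induced $q$--$q'$ paths of opposite parity. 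Being earlier than all of $C'$ is precisely what makes $C'$ (with a shrunken covering set $B'$) anticomplete to them.
\end{itemize}
An odd-only segment costs $5$ colors in the same way (\Cref{lemma:construction_quasi}). After one further halving, paid once to close the broken rope through lower levels, this yields \Cref{thm:quasi_rope_existence} with bound $16r+11$ for odd $r$. With $r=5$, $\chi(G)\ge 187$ gives $\chi(H)\ge 181$ and a level of $H$ with chromatic number at least $91=16\cdot 5+11$. Without this mechanism and these constants, reaching $186$ is not bookkeeping.
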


The approach of~\cite{chudnovsky2024colouringtperfectgraphs} is based on only two properties of t-perfect graphs: they do not contain odd wheels as t-minors, and they have a stable set that intersects every shortest odd cycle.
As the main technical contribution, the authors show that every graph with large odd girth and large chromatic number contains a certain structure called an ''arithmetic rope'', and observe that the existence of an arithmetic rope in a certain leveling of a graph with large odd girth would imply the existence of an odd wheel as a t-minor.
We refine their proof and show that the existence of an arithmetic rope can be guaranteed under a much weaker assumption on the chromatic number.
To decrease the bound on the chromatic number even further, we exploit the fact that a slightly weaker structure, which we call a \emph{quasi-arithmetic rope}, is already sufficient for finding an odd wheel t-minor.

\Cref{thm:main} also yields an improved bound on the chromatic number of h-perfect graphs, which are defined as the graphs $G$ whose stable set polytope coincides with the set of vectors $x \in [0,1]^{V(G)}$ satisfying the odd cycle inequalities and $\sum_{v \in Q} x_v \leq 1$ for every clique $Q$ of $G$ (clique inequalities).
Using a result of Sebő (unpublished, see~\cite{bruhn2012claw}) and Marcus~\cite{marcus1996shortestoddcycles}, the authors of~\cite[Thm. 1.2]{chudnovsky2024colouringtperfectgraphs} observe that if every t-perfect graph is $k$-colorable, then every h-perfect graph $G$ is ($\omega(G) + k - 2)$-colorable.
Hence, we obtain the following bound.
\begin{corollary}
    Every h-perfect graph $G$ is $(\omega(G) + 184)$-colorable.
\end{corollary}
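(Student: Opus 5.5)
The corollary follows directly from \Cref{thm:main} combined with the reduction from h-perfect to t-perfect graphs quoted above from \cite[Thm.~1.2]{chudnovsky2024colouringtperfectgraphs}. That reduction, which rests on the result of Sebő and Marcus, states that if every t-perfect graph is $k$-colorable, then every h-perfect graph $G$ is $(\omega(G)+k-2)$-colorable.

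The proof proceeds in two steps. First, \Cref{thm:main} shows that every t-perfect graph is $k$-colorable with $k=186$. Second, substituting this value of $k$ into the reduction gives that every h-perfect graph $G$ is $(\omega(G)+186-2)=(\omega(G)+184)$-colorable, which is exactly the claim.

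There is essentially no obstacle here beyond verifying that the hypothesis of the cited reduction is precisely ``every t-perfect graph is $k$-colorable'' for an arbitrary positive integer $k$. In particular, it must not depend on the specific constant $199053$ used in \cite{chudnovsky2024colouringtperfectgraphs}, and it must not require any additional structural property of t-perfect graphs beyond colorability.

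If one wanted to see why the reduction holds, the idea is as follows. Let $G$ be h-perfect and apply induction on $\omega(G)$. If $\omega(G)\le 3$, then $G$ is t-perfect: h-perfection together with the absence of $K_4$ makes the clique inequalities redundant given the edge and triangle inequalities. Hence $G$ is $k$-colorable, and $k\le\omega(G)+k-2$ whenever $\omega(G)\ge 2$; the case $\omega(G)\le 1$ is trivial. For the inductive step, one removes a stable set $S$ meeting every maximum clique. Such a set exists by the polyhedral arguments of Sebő and Marcus, since the all-$1/\omega$ vector lies in the polytope. The graph $G-S$ is again h-perfect, because h-perfection is preserved under vertex deletion, and $\omega(G-S)=\omega(G)-1$. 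By induction $G-S$ can be colored with $\omega(G)-1+k-2$ colors, and $S$ receives one new color, giving $\omega(G)+k-2$ colors in total. We rely on the cited theorem rather than reproving this.
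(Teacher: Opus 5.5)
Your proposal is correct and is the paper's argument: the corollary follows by substituting $k=186$ from \Cref{thm:main} into the reduction of \cite[Thm.~1.2]{chudnovsky2024colouringtperfectgraphs}, which turns $k$-colorability of all t-perfect graphs into $(\omega(G)+k-2)$-colorability of every h-perfect graph $G$. Your optional sketch of that reduction is also sound, though the paper does not reprove it: h-perfect graphs with $\omega\le 3$ are t-perfect, and for $\omega\ge 4$ the vector $\frac{1}{\omega}\mathbf{1}$ lies in the stable set polytope, so any stable set in its convex decomposition meets every maximum clique.
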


In Section~\ref{sec:basics}, we introduce definitions of the aforementioned structures, review the essential steps of the proof of~\cite{chudnovsky2024colouringtperfectgraphs}, describe our main result on the existence of quasi-arithmetic ropes, and explain how this leads to an improved bound on the chromatic number of t-perfect graphs.
The proof of our main result is given in Sections~\ref{sec:quasi_arithmetic_rope}, \ref{sec:segments_odd_and_even}, and \ref{sec:segments_even}.

\section{Quasi-arithmetic ropes and t-minors of odd wheels}
\label{sec:basics}

An essential property of t-perfect graphs is that they are closed under taking t-minors. 
This operation is based on the notion of a \emph{t-contraction}, which refers to the operation of contracting all edges in the closed neighborhood of a vertex whose neighborhood is a stable set.
A \emph{t-minor} of a graph $G$ is a graph obtained from $G$ by a sequence of vertex deletions and t-contractions.
Gerards and Shepherd~\cite{tminorstperfectgerards1998} observed that if $G$ is t-perfect, each of its t-minors is t-perfect as well.

A basic family of graphs that are known not to be t-perfect are the \emph{odd wheels}, which are obtained from odd cycles by adding a new central vertex that is adjacent to all vertices of the cycle.
Consequently, t-perfect graphs do not contain odd wheels as t-minors.

The main contribution of~\cite{chudnovsky2024colouringtperfectgraphs} is to show that graphs with sufficiently large chromatic number and odd girth must contain an odd wheel as a t-minor. Here, the odd girth of a graph is defined as the length of its shortest odd cycle. 
To bound the chromatic number of a t-perfect graph, it is indeed sufficient to consider graphs with large odd girth due to the following fact.

\begin{proposition}[{Marcus~\cite{marcus1996shortestoddcycles}, see also \cite[Lem.~3.4]{chudnovsky2024colouringtperfectgraphs}}]\label{prop:marcus}
    Every t-perfect graph $G$ has a stable set that intersects every shortest odd cycle in $G$.
\end{proposition}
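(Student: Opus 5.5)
The plan is to use the polyhedral definition of t-perfection directly: exhibit a fractional point of the stable set polytope that makes every odd cycle inequality of a shortest odd cycle tight, and then read off a suitable stable set from a convex decomposition of this point. If $G$ is bipartite there are no odd cycles and any stable set (e.g.\ the empty set) works, so assume $G$ has odd girth $2m+1$ with $m \ge 1$.

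Define $x \in \mathbb{R}^{V(G)}$ by $x_v = \frac{m}{2m+1}$ for all $v$. We check that $x$ lies in the polytope given by the trivial, edge, and odd cycle inequalities.
\begin{itemize}
\item Clearly $x \in [0,1]^{V(G)}$.
\item Every edge inequality holds, since $x_u + x_v = \frac{2m}{2m+1} \le 1$.
\item Let $C$ be an odd cycle of length $\ell \ge 2m+1$. Then $\sum_{v\in V(C)} x_v = \frac{\ell m}{2m+1}$, and the inequality $\frac{\ell m}{2m+1} \le \frac{\ell-1}{2}$ is equivalent to $2\ell m \le (\ell-1)(2m+1)$, i.e.\ to $\ell \ge 2m+1$. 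So it holds, with equality exactly when $C$ is a shortest odd cycle.
\end{itemize}

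Since $G$ is t-perfect, this polytope is the stable set polytope of $G$. Hence $x = \sum_i \lambda_i \chi^{S_i}$ is a convex combination, with all $\lambda_i > 0$, of characteristic vectors of stable sets $S_i$.

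Now fix a shortest odd cycle $C$. Each $\chi^{S_i}$ satisfies the odd cycle inequality $|S_i \cap V(C)| \le m$, because this inequality is valid for the polytope. Averaging with weights $\lambda_i$ gives $\sum_i \lambda_i |S_i\cap V(C)| = \sum_{v\in V(C)} x_v = m$. All $\lambda_i$ are positive and every term is at most $m$, so every term is tight: $|S_i \cap V(C)| = m \ge 1$ for every $i$.

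Therefore any single $S_i$ from the decomposition, say $S_1$, is a stable set meeting every shortest odd cycle of $G$. This is the standard face argument: all stable sets in the support of $x$ lie on the face defined by the tight inequalities.

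The only step needing care is choosing the constant $\frac{m}{2m+1}$ so that longer odd cycles remain feasible while shortest ones become tight; the computation above confirms this choice works.
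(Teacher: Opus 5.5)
Your proof is correct and complete. The point $x=\frac{m}{2m+1}\mathbf{1}$ satisfies all edge and odd cycle inequalities, with equality exactly on the shortest odd cycles. By t-perfection it is therefore a convex combination of stable sets with positive weights, and each of these stable sets must meet every shortest odd cycle in exactly $m\ge 1$ vertices.

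The paper gives no proof of this proposition; it only cites Marcus and Lemma~3.4 of Chudnovsky et al. Your face (complementary slackness) argument is the standard proof of this fact, and it gives the slightly stronger statement that every stable set in the support meets each shortest odd cycle in exactly $m$ vertices.
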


To extract odd wheel t-minors, the authors of~\cite{chudnovsky2024colouringtperfectgraphs} introduce the notion of an arithmetic rope.
We consider the slightly different notion of a \emph{quasi-$r$-arithmetic rope} in a graph $G$, which consists of vertices $q_1,\dots,q_r$ together with paths $Q_{i,1}, Q_{i,2}$ for $i \in [r] = \{1, \dots, r\}$ such that (taking indices modulo $r$)
\begin{itemize}
    \item[a)] each $Q_{i,1}$ and $Q_{i, 2}$ is a $q_iq_{i + 1}$-path,
    \item[b)] each $Q_{i,1}$ has odd length,
    \item[c)] $Q_{i, 2}$ or $Q_{i + 1, 2}$ has even length,
    \item[d)] for every $(h_1,\dots,h_r) \in \{1,2\}^r$, the graph $Q_{1,h_1} \cup \dots \cup Q_{r,h_r}$ is an induced cycle, and
    \item[e)] $q_1, \dots, q_r$ have pairwise $G$-distance at least 6.
\end{itemize}
If all vertices of the paths $Q_{i,1}, Q_{i,2}$ are contained in a vertex set $X \subseteq V(G)$, we say that \emph{$G$ has a quasi-$r$-arithmetic rope in $X$}.
Note that in this case, condition e) still refers to the distance in $G$ and not the distance in $G[X]$.
When constructing quasi-arithmetic ropes, we will make use of the fact that for some $i$, both $Q_{i,1}$ and $Q_{i,2}$ are allowed to have even length.
For notational convenience, we will simply set $Q_{i, 2} = Q_{i, 1}$ in these cases.

Let $N_G^t(u)$ denote the set of vertices with distance exactly $t$ from $u$ in $G$ and let $N_G^t[u] = \bigcup_{i = 0}^t N_G^t(u)$ denote the corresponding closed neighborhood.
A key observation in the proof of~\cite{chudnovsky2024colouringtperfectgraphs} is the following.
\begin{proposition}\label{prop:t_minor_in_rope}
    Let $G$ be a graph with odd girth at least 7, $u \in V(G)$, and $t \ge 4$. If $G$ contains a quasi-$5$-arithmetic rope in $N_G^t(u)$, then $G$ has an odd wheel t-minor.
\end{proposition}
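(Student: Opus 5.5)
The plan is to build an explicit subgraph of $G$ that looks like a subdivided $5$-wheel, and then shrink it to a genuine odd wheel using only vertex deletions and t-contractions. The rim will be a cycle $C$ taken from the rope and the hub will be $u$. Let $q_1,\dots,q_5$ and $Q_{i,1},Q_{i,2}$ be the quasi-$5$-arithmetic rope in $N_G^t(u)$.

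First, I choose the rim. I take $h=(1,\dots,1)$ and let $C=Q_{1,1}\cup\dots\cup Q_{5,1}$. By d), $C$ is an induced cycle. By b), it is the union of five paths of odd length, so $|C|$ is odd. By e), each $Q_{i,1}$ has length at least $6$, hence at least $7$.

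If the parity bookkeeping in the last step forces a different choice, I would use the freedom given by c) instead. Because $5$ is odd, no two cyclically consecutive indices can both have $Q_{i,2}$ of odd length. So $Q_{i,2}$ is even for at least three indices $i$. Switching $h_i$ at such an index flips the parity of the corresponding segment, so I can prescribe the parities of the segments quite freely.

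Next, I build the spokes. For each $i$ I fix a shortest $u$–$q_i$ path $P_i$, and I choose these paths greedily inside one BFS tree rooted at $u$. Their union $T$ is then a tree whose vertices lie in levels $0,\dots,t$. Its only level-$t$ vertices are $q_1,\dots,q_5$, since every interior vertex of a geodesic lies at distance less than $t$ from $u$.

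I delete every vertex outside $V(T)\cup V(C)$, and I check which edges survive between $T$ and $C$. An edge between a vertex of $T$ at level $t-1$ and a vertex of $C$ other than some $q_i$ could still be present. To exclude such edges I delete them by choosing $T$ more carefully, or by deleting offending tree vertices. The point is that each $q_i$ needs only one geodesic, and the $q_j$ are far apart by e).

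If this step becomes problematic, I would instead use the odd girth $\ge 7$ together with $t\ge 4$. If two rim vertices at distance at least $2$ along $C$ had a common neighbour, then combining this with the rope paths and the parity structure would produce short odd cycles. This is the place where I expect the real work, and where the hypotheses odd girth $\ge 7$ and $t\ge4$ enter.

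Then I contract the tree into a single hub. I t-contract at $u$, and then repeatedly at the merged vertex $w$, level by level. At each stage the neighbours of $w$ form the next level of $T$, and they are pairwise non-adjacent: only tree edges remain among levels below $t$, and a BFS tree has no edges inside a level. After contracting through level $t-1$, the vertex $w$ is adjacent exactly to $q_1,\dots,q_5$. So we obtain the graph $C+w$ with five spokes.

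Finally, I shorten the segments. Take an interior vertex $x$ of a segment $Q_{i,1}$ whose two neighbours on $C$ are not $q$'s. Its degree is $2$, and since $C$ is induced and long its two neighbours are non-adjacent. A t-contraction at $x$ therefore shortens that segment by $2$ and preserves all the other structure. I repeat this until every segment has length $1$, which is possible because every segment has odd length. The result is $C_5$ plus a hub adjacent to all of $q_1,\dots,q_5$, that is, the odd wheel $W_5$. This shows that $G$ has an odd wheel t-minor.

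The main obstacle is the spoke step. I must make sure that after the deletions no unwanted adjacencies remain between tree vertices and rim vertices, or inside the tree. Otherwise the hub acquires extra rim neighbours, or some t-contraction fails because a neighbourhood is not stable. I would first try to handle this through the choice of geodesics and the odd girth bound. If that fails, I would fall back on the parity freedom from c) to tolerate a bounded number of extra attachments.
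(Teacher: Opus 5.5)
Your proposal has a genuine gap exactly at the spoke step you flag, and the two assertions meant to carry it are false.

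\textbf{Same-level edges.} Deleting the vertices outside $V(T)\cup V(C)$ leaves the induced subgraph $G[V(T)\cup V(C)]$, and t-minor operations never delete edges. So it is irrelevant that a BFS tree has no edges inside a level. What matters is whether $G$ has edges between vertices of $T$ on the same level, and odd girth $7$ permits this from level $3$ on. Suppose the geodesics to $q_i$ and $q_j$ leave $u$ through different neighbours $a,a'$ and pass through adjacent level-$3$ vertices $x,y$. Then $u\,a\,b\,x\,y\,b'\,a'$ is a $7$-cycle. This configuration only forces $d_G(q_i,q_j)\le 2t-5$, which is compatible with e) once $t\ge 6$. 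If these are the only geodesics to $q_i$ and $q_j$, no choice of BFS tree avoids it, and deleting $x$ or $y$ destroys a spoke. When your merged vertex reaches level $3$, its neighbourhood contains the edge $xy$, so the t-contraction is not allowed.

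No other contraction order helps either. Since $N(v)$ is stable, a t-contraction at $v$ only turns the edges at $v$ into loops, two for each passage of a closed walk through $v$. So every odd cycle is mapped to an odd closed walk of length at least $3$. As long as you keep $V(T)$, the image of $V(T)\setminus\{q_1,\dots,q_5\}$ therefore never becomes a single hub. For $t\in\{4,5\}$ your level-by-level contraction does work: levels $\le 2$ are stable by the odd girth, and levels $t-2,t-1$ by e). The problem is levels $3,\dots,t-3$.

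\textbf{Extra rim attachments.} Separately, the claim that the final hub sees exactly $q_1,\dots,q_5$ is unjustified. The level-$(t-1)$ neighbour of $q_i$ may have further neighbours on $C$, e.g.\ closing a $4$-cycle through $q_i$. This is repairable by a parity count: each odd arc $Q_{i,1}$ is cut by the extra attachments into an odd number of odd pieces, so you can shrink odd pieces to single edges and collapse even ones. But you do not carry this out.

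\textbf{What the paper does instead.} The paper does not attempt a five-spoke hub. It relies on the argument of Chudnovsky et al., which uses only three of the $q_i$, chosen in the course of the argument. From the rope it needs only this: for every choice of three among $q_1,\dots,q_5$, the paths $Q_{i,h_i}$ can be chosen so that the resulting induced cycle is odd and is cut by these three vertices into three odd paths. Your approach discards exactly this freedom by fixing the rim $Q_{1,1}\cup\dots\cup Q_{5,1}$ and all five spokes in advance.

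Which $q_i$ can be joined to a common hub through the lower levels is dictated by the structure there, including parity. Even a bipartite connector can be collapsed onto one vertex, without contracting at a vertex adjacent to the rim, only if all its rim-adjacent vertices lie on the same side of its bipartition. So the lower levels have to decide which three $q_i$ are used, and the rim must then be re-chosen to fit them.
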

Strictly speaking, the above statement is only shown for arithmetic ropes, which differ from our notion of quasi-arithmetic ropes in the sense that each $Q_{i,2}$ is required to have odd length, and the pairwise $G$-distance between the $q_i$'s to be at least 5. However, their proof only relies on the fact that for every choice of three vertices among $q_1,\dots,q_5$, there is a choice of paths along the $5$-arithmetic rope such that the resulting induced cycle is odd and partitioned into three odd length paths by the three vertices. Since this is also true for our notion of quasi-$5$-arithmetic ropes, their proof directly carries over to our setting, so we refrain from repeating it.

Our main contribution is the following theorem.

\begin{theorem}\label{thm:quasi_rope_existence}
    For $r \in \mathbb{N}$, every graph $G$ with odd girth at least 15 and $X \subseteq V(G)$ contains a quasi-$r$-arithmetic rope in $X$ if 
    \[\chi(X) \geq \begin{cases}
        16r + 5 & \text{if } r \text{ is even,}\\
        16r + 11 & \text{if } r \text{ is odd.}
    \end{cases}\]
\end{theorem}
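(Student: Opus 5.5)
Our approach builds the rope incrementally by a leveling argument, in the spirit of Gyárfás-type path constructions, while keeping enough chromatic number in reserve to close the cycle at the end. First, we pass to a connected component of $G[X]$ with the same chromatic number. We fix a vertex $v_0$ there and consider the BFS layers $L_j$ of $G[X]$ from $v_0$. Some layer $L_j$ has chromatic number at least $\chi(X)/2$, since even and odd layers together color $X$. Inside such a layer, and within a component $C$ of a high-chromatic induced subgraph, we will grow the rope. The odd-girth hypothesis (at least $15$) is the tool that controls parities. Any two shortest paths back to a common ancestor close a cycle of length below $15$ if the vertices are close, so every cycle of length at most $14$ in $G$ is even. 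Consequently, two paths between the same endpoints of total length below $15$ have equal parity. This lets us both force $Q_{i,1}$ to be odd and decide when $Q_{i,2}$ must be even.

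The key step is an inductive lemma. Suppose $H=G[Y]$ is connected with $\chi(Y)\ge 16k+c$ and $q\in Y$ is given. Then we find vertices $q=q_1,\dots,q_{k+1}$ and path pairs $Q_{i,1},Q_{i,2}$ for $i\le k$ satisfying a)–e) for the open chain. The remaining set $Y'\subseteq Y$ has $\chi(Y')\ge\chi(Y)-16$, stays connected, and contains $q_{k+1}$ with no neighbours of the earlier paths except near $q_{k+1}$. To advance one step, we delete the ball $N^5_G[\,\cdot\,]$ around everything built so far. Deleting radius-$5$ balls costs only a constant amount of chromatic number. The reason is that in a graph of odd girth at least $15$, each distance sphere $N^s_G(w)$ with $s\le 6$ is a stable set: an edge inside it would give an odd closed walk of length $2s+1\le 13$. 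So $N^5_G[w]$ is $2$-colourable, and more precisely $N^t_G[w]\cap$ (one parity class) is stable. Accounting this way, each step removes at most about $16$ colours: a couple of balls, each bipartite across a few levels, plus the used path vertices. Within the remaining high-chromatic part, we then take a vertex $q_{i+1}$ at distance $\ge 6$ from all previous $q_j$ and join it to $q_i$ in two ways, by a shortest path and by a path through a different BFS branch. Their parities are determined by the levels. If the two parities differ, they give an odd $Q_{i,1}$ and an even $Q_{i,2}$; if they agree and both are odd, we take $Q_{i,2}=Q_{i,1}$ as an odd single path. Condition c) only requires evenness in every other step, which gives slack to alternate. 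Induced-ness for every choice in d) follows because the two alternatives share only their ends and everything earlier was cut out by ball deletions.

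Closing the rope is the main obstacle. We must join $q_r$ back to $q_1$ so that d) holds for all $2^r$ choices and c) holds cyclically between $r$ and $1$. The plan is to reserve the starting vertex $q_1$ together with a high-chromatic connected piece adjacent to it that is kept disjoint from the chain; this is why the constant term and the $16r$ rather than $16(r-1)$ appear. The final pair $Q_{r,1},Q_{r,2}$ is then routed through this reserved piece. The parity mismatch between $r$ even and $r$ odd is handled by the extra $6$ colours in the odd case. If the alternation of c) fails to wrap around, we spend one additional step to make a further segment carry both parities, so that c) is satisfied between $Q_{r,2}$ and $Q_{1,2}$. I expect the fiddliest part to be verifying that no chord appears between the closing segment and the early segments near $q_1$. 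To handle it, we would insist from the start that the reserved piece lies at distance at least $2$ from every built path except at $q_1$. The $5$-ball deletions make this automatic for everything built after the first step.
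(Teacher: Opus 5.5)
There is a genuine gap at the heart of your induction. The step ``delete $N^5_G[\,\cdot\,]$ around everything built so far, which costs only a constant'' fails. Only a ball around a \emph{single} vertex is bipartite. The union of even the first neighbourhoods of the vertices of a long path can have unbounded chromatic number while the odd girth stays at least $15$: take any $H$ of odd girth at least $15$ and chromatic number $k$, add a new path $P$, and join its $20i$-th vertex to the $i$-th vertex of $H$. Every new cycle is longer than $20$, yet $N(P)\supseteq V(H)$. Since your paths have no length bound, the accounting ``at most about $16$ per step'' has no basis. If instead you only delete balls around the $q_j$, nothing prevents chords from later segments to interiors of earlier paths, so d) is unproved either way.

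The paper never deletes path neighbourhoods. For each segment it:
\begin{itemize}
\item runs a BFS from the current end $q$ inside $C\cup\{q\}$ and orders its vertices $m_1,m_2,\dots$ by distance;
\item splits the covering set $B$ into $B_0^t,B_1^t,B_2^t$ according to whether, and with which parity, a vertex of $B$ reaches $q$ through $\mathcal{M}^{\le t-3}$;
\item builds a stable grading of $C\setminus N^5_G[q]$ by the first $m_i$ that a vertex sees, either directly or through $B_1^t\cup B_2^t$.
\end{itemize}
The left-active lemmas (\Cref{lemma2:beyond_leveling} and \Cref{lemma2:quasi}) then give a connected remainder $C'$, losing only $11$ (resp.\ $5$) colours. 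They also give an edge $u'q'$ (or a vertex $\hat q$) earlier than all of $C'$, with $u'$ having no neighbour in $C'$. The new paths are routed through $m_1,\dots,m_k$, the edge $u'q'$, and at most one vertex of $B$ each. They are anticomplete to $C'$ because of the ordering, not because anything was deleted.

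Two further ingredients are missing.

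First, nothing in your construction forces an odd and an even path where c) needs one. Paths that descend through BFS levels all have the same length, so ``a different BFS branch'' gives no parity control. Your fallback also covers only the case where both paths are odd. In the paper the parity switch comes from the edge $u'q'$. Either both ends lie in $C_0^t\subseteq M_t$, giving paths of lengths $t$ and $t+1$. Or both lie in $C_h$, so the detours through $b_{u'},b_{q'}\in B_h$ have the same parity up to the extra edge $u'q'$.

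Second, your closing step gives no way to get from $q_r$ into a reserved piece next to $q_1$ without crossing neighbourhoods of the chain. The paper keeps the reserve $C'$, with $\chi(C')\ge 3$, at the \emph{far} end, attached to $q_{r+1}$. It walks inside $C'$ to a vertex $x$ at distance at least $6$ from $q_{r+1}$, steps to $b\in B'\subseteq L_s$, and returns to $q_1\in L_s$ through the lower levels $L_0,\dots,L_{s-1}$ of the initial leveling. The whole rope lies in $L_s\cup L_{s+1}$, its $L_s$-vertices are confined to $N^2[q_{i+1}]$, and $B'$ is anticomplete to the rope outside $N^3[q_{i+1}]$. Hence this closing path has no chords. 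Goodness of the last segment then lets one relabel $Q_{r,1},Q_{r,2}$ according to the parity of the closing path.

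This is also where $16r$ comes from: segment costs alternate between $5$ and $11$ in the layer, and the leveling loses a factor of $2$.
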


To see how the above result implies \Cref{thm:main}, assume that $G$ is a t-perfect graph with $\chi(G) \geq 187$. According to \Cref{prop:marcus}, we may iteratively remove six stable sets from $G$ to obtain an induced subgraph $H$ with odd girth at least 15. Note that $\chi(H) \geq 181$. Let $u$ be an arbitrary vertex of a connected component of $H$ with maximum chromatic number and note that there must be a level $t$ with $\chi(N_H^t(u)) \geq \lceil \frac{181}{2} \rceil= 91 = 16\cdot 5 + 11$. Since the odd girth of $H$ is at least 15, $N_H^t[u]$ is bipartite and thus $t \geq 4$. By \Cref{thm:quasi_rope_existence}, $H$ contains a quasi-5-arithmetic rope in $N_H^t(u)$. \Cref{prop:t_minor_in_rope} now implies that $H$ contains an odd wheel t-minor, and so does $G$, contradicting the fact that $G$ is t-perfect.

The authors of~\cite[Thm. 4.1]{chudnovsky2024colouringtperfectgraphs} prove the following variant of the above result:
For $r \in \mathbb{N}$, every graph $G$ with odd girth at least 11 and $X \subseteq V(G)$ contains an $r$-arithmetic rope in $X$ if 
\[ \chi(X) \geq 6^{r + 1} + \frac{34}{5} (6^r - 1) - 1.\]
The exponential dependence on $r$ stems from constructing segments of the arithmetic rope by recursively restricting to levels of large chromatic number (causing a factor of 2 in each step), and dividing each such level into three parts (yielding an additional factor of 3).
By constructing segments in a more global fashion and avoiding the case distinction, we can show the existence of an $r$-arithmetic rope already for the much weaker bound $\chi(X) \geq 30r + 5$ under the same assumptions.
Since a quasi-arithmetic rope only requires some of the segments to contain an even length path, the bound can be weakened even further.
In case of a slightly larger odd girth, this bound can be improved even further, leading to \Cref{thm:quasi_rope_existence}.

The remainder of this paper is devoted to the proof of \Cref{thm:quasi_rope_existence}.

\begin{figure}[ht]
\centering
\begin{tikzpicture}[
    scale=0.72,
    vertex/.style={circle, fill=black, inner sep=1.4pt},
    lab/.style={font=\Large}
]




\coordinate (a1) at (-3.2,  1.1);
\coordinate (a2) at (-0.3,  1.2);
\coordinate (a3) at ( 2.7,  0.0);
\coordinate (a4) at ( 1.2, -2.1);
\coordinate (a5) at (-2.4, -2.1);

\draw[dashed, thick]
  (a1) .. controls (-2.4,1.1) and (-1.7,2.2) .. (a2);

\draw[dotted, thick]
  (a1) .. controls (-2.1,1.5) and (-1.3,0.7) .. (a2);

\draw[dashed, thick]
  (a2) .. controls (0.3,0.3) and (1.6,0.7) .. (a3);

\draw[dotted, thick]
  (a2) .. controls (0.8,2.2) and (2.0,1.4) .. (a3);

\draw[dashed, thick]
  (a3) .. controls (2.4,-0.8) and (3.3,-2.4) .. (a4);

\draw[dotted, thick]
  (a3) .. controls (3.2,-0.9) and (2.6,-1.9) .. (a4);

\draw[dashed, thick]
  (a4) .. controls (0.3,-1.8) and (-0.6,-1.5) .. (a5);

\draw[dotted, thick]
  (a4) .. controls (0.0,-2.6) and (-0.8,-2.5) .. (a5);

\draw[dashed, thick]
  (a5) .. controls (-3.4,-1.8) and (-3.6,-0.5) .. (a1);

\draw[dotted, thick]
  (a5) .. controls (-2.7,-0.8) and (-2.5,0.6) .. (a1);

\node[vertex] at (a1) {};
\node[vertex] at (a2) {};
\node[vertex] at (a3) {};
\node[vertex] at (a4) {};
\node[vertex] at (a5) {};

\node[lab] at (-3.55,  1.45) {$q_1$};
\node[lab] at (-0.30,  1.85) {$q_2$};
\node[lab] at ( 3.05,  0.35) {$q_3$};
\node[lab] at ( 1.00, -1.60) {$q_4$};
\node[lab] at (-2.05, -1.65) {$q_5$};





\begin{scope}[xshift=2cm]
\coordinate (b1) at (4.0,  1.1);
\coordinate (b2) at (6.9,  1.2);
\coordinate (b3) at (9.9,  0.0);
\coordinate (b4) at (8.4, -2.1);
\coordinate (b5) at (4.8, -2.1);

\draw[dashed, thick]
  (b1) .. controls (4.8,1.1) and (5.5,2.2) .. (b2);

\draw[dotted, thick]
  (b1) .. controls (5.1,1.5) and (5.9,0.7) .. (b2);

\draw[dotted, thick]
  (b2) .. controls (8.0,2.2) and (9.2,1.4) .. (b3);

\draw[dashed, thick]
  (b3) .. controls (9.6,-0.8) and (10.5,-2.4) .. (b4);

\draw[dotted, thick]
  (b3) .. controls (10.4,-0.9) and (9.8,-1.9) .. (b4);

\draw[dotted, thick]
  (b4) .. controls (7.2,-2.6) and (6.4,-2.5) .. (b5);

\draw[dashed, thick]
  (b5) .. controls (3.8,-1.8) and (3.6,-0.5) .. (b1);

\draw[dotted, thick]
  (b5) .. controls (4.5,-0.8) and (4.7,0.6) .. (b1);

\node[vertex] at (b1) {};
\node[vertex] at (b2) {};
\node[vertex] at (b3) {};
\node[vertex] at (b4) {};
\node[vertex] at (b5) {};

\node[lab] at (3.65,  1.45) {$q_1$};
\node[lab] at (6.90,  1.85) {$q_2$};
\node[lab] at (10.25, 0.35) {$q_3$};
\node[lab] at (8.20, -1.60) {$q_4$};
\node[lab] at (5.15, -1.65) {$q_5$};
\end{scope}

\end{tikzpicture}
\caption{The left figure shows a $5$-arithmetic rope, while the right figure shows a quasi-$5$-arithmetic rope. Dotted lines represent odd-length paths, and dashed lines represent even-length paths. Note that the pairwise $G$-distance between the $q_i$'s is at least $5$ for arithmetic ropes and at least $6$ for quasi-arithmetic ropes.}
\label{fig:arithmetic_comparison}
\end{figure}

\section{Constructing a quasi-arithmetic rope}
\label{sec:quasi_arithmetic_rope}

In the remainder of this paper, we will make repeated use of the fact that $N^{k}[v]$ is bipartite for every vertex $v$ whenever $G$ has odd girth at least $2k + 3$ (see~\cite[Obs. 2.3]{chudnovsky2024colouringtperfectgraphs}).
In what follows, we say that a vertex set $B$ covers $C$ if $ B$ and $ C$ are disjoint and every vertex in $C$ has a neighbor in $B$. Moreover, we say that a vertex set $X$ is anticomplete to a vertex set $Y$ if every vertex in $X$ has no neighbor in $Y$.

In analogy to~\cite{chudnovsky2024colouringtperfectgraphs}, let us consider the notion of a \emph{broken quasi-$r$-arithmetic rope}, which consists of vertices $q_1, \dots, q_{r + 1}$ together with paths $Q_{i, 1}, Q_{i, 2}$ for $i \in [r]$ such that 

\begin{itemize}
    \item each $Q_{i, 1}$ and $Q_{i, 2}$ is a $q_iq_{i+1}$-path,
    \item each $Q_{i, 1}$ has odd length,
    \item for every $h \in \{1, 2\}^r$, $Q_{1, h_1} \cup \dots \cup Q_{r, h_r}$ is an induced path, and
    \item the vertices $q_1, \dots, q_{r + 1}$ are pairwise at $G$-distance at least 6.
\end{itemize}

We call a broken quasi-$r$-arithmetic rope \emph{alternating} if $Q_{i, 2}$ has even length whenever $i \in [r]$ is even, and call it \emph{good} if, in addition, $Q_{r,2}$ has even length.
Clearly, the two notions are equal if $r$ is even.
We will prove the following.

\begin{lemma}\label{lemma:construction_almost_broken}
    Let $r \geq 1$, let $G$ be a graph with odd girth at least 15, and let $B, C \subseteq V(G)$ be disjoint with $B$ covering $C$. Let $q_1 \in V(G) \setminus C$ be a vertex such that $G[C \cup \{q_1\}]$ is connected and $\chi(C) \geq 11 \left\lfloor \frac{r}{2} \right\rfloor + 5 \left\lceil \frac{r}{2} \right\rceil + c$. Then there exists $B' \subseteq B, C' \subseteq C, q_2, q_3, \dots, q_{r + 1} \in C \setminus C'$ and an alternating broken quasi-$r$-arithmetic rope $R$ on vertices $q_1,\dots,q_{r+1}$ with paths $(Q_{i, 1}, Q_{i, 2})_{i = 1}^r$ such that 
    \begin{itemize}
        \item $G[C' \cup \{q_{r + 1}\}]$ is connected,
        \item $\chi(C') \geq c$,
        \item $B'$ covers $C'$,
        \item $B'$ is anticomplete to $(V(Q_{i, 1}) \cup V(Q_{i, 2})) \setminus N^3[q_{i + 1}]$ for   all $i \in [r]$,
        \item $(V(Q_{i, 1}) \cup V(Q_{i, 2})) \setminus N^2[q_{i + 1}] \subseteq C \cup \{q_1\}$ for all $i \in [r]$,
        \item $C'$ is anticomplete to $\left( \bigcup_{i = 1}^r (V(Q_{i, 1}) \cup V(Q_{i, 2})) \right) \setminus \{q_{r + 1}\}$, and
        \item the $G$-distance between $\{q_1, \dots, q_{r}\}$ and $C' \cup \{q_{r+  1}\}$ is at least $6$. 
    \end{itemize}
    If $\chi(C) \geq 11 \left\lceil \frac{r}{2} \right\rceil + 5 \left\lfloor \frac{r}{2} \right\rfloor + c$, then $R$ can even be chosen to be good.
\end{lemma}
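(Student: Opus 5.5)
Before describing the step, note the accounting. The bound $11\lfloor r/2\rfloor + 5\lceil r/2\rceil + c$ charges $5$ per odd-indexed segment and $11$ per even-indexed segment; the good variant charges $11$ for the last segment when $r$ is odd. This suggests building the rope by induction on $r$, one segment at a time, using two single-segment lemmas:
\begin{itemize}
\item an ``odd-only'' step that costs $5$ colors and produces one odd path $Q_{i,1}$ with $Q_{i,2}=Q_{i,1}$;
\item an ``odd-and-even'' step that costs $11$ colors and produces both an odd and an even $q_iq_{i+1}$-path.
\end{itemize}
These are presumably the contents of Sections~\ref{sec:segments_odd_and_even} and~\ref{sec:segments_even}. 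I would state each as a lemma with hypotheses identical to the present lemma for $r=1$: disjoint $B,C$, with $B$ covering $C$, $G[C\cup\{q_1\}]$ connected and $\chi(C)\ge 5+c$ (respectively $11+c$). Its conclusion is the full list of bullets for a single segment.

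The lemma then follows by concatenation. Apply the appropriate step to $(B,C,q_1)$ to obtain $(B_1,C_1,q_2)$ with $\chi(C_1)$ decreased by $5$ or $11$. Then apply the induction hypothesis to $(B_1,C_1,q_2)$ with $r-1$ segments and the parity pattern shifted. The main verification is that the conditions survive gluing.
\begin{itemize}
\item \textbf{Induced paths.} Later segments live in $C_1\cup\{q_2\}$, and $C_1$ is anticomplete to the earlier paths minus $q_2$. Hence every union $Q_{1,h_1}\cup\dots\cup Q_{r,h_r}$ is an induced path.
\item \textbf{Anticompleteness to $B'$.} The final $B'\subseteq B_1$, and $B_1$ is anticomplete to the first segment outside $N^3[q_2]$. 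Inside $N^3[q_2]$ we need nothing, since the bullet only concerns vertices outside $N^3[q_{i+1}]$.
\item \textbf{Containment in $C\cup\{q_1\}$.} Segments after the first lie outside $N^2[q_{i+1}]$ within $C_1\cup\{q_2\}\subseteq C\cup\{q_1\}$.
\item \textbf{Distances.} Pairwise distance at least $6$ for $q_1,\dots,q_{r+1}$ follows from the last bullet: each $q_j$ lies in $C_{j-1}\cup\{q_j\}$, which is at distance at least $6$ from earlier $q$'s. Likewise $C'\subseteq C_1$ keeps distance at least $6$ from $q_1$.
\item \textbf{Parity.} Alternation is arranged by choosing the odd-and-even step exactly at even indices. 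The good version also uses it at index $r$.
\end{itemize}

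The substance lies in the single-segment step. Starting from $q_1$ with $G[C\cup\{q_1\}]$ connected, I would take BFS levels $L_j=N^j_{G[C\cup\{q_1\}]}(q_1)\cap C$. Since $\chi(C)\le 2\max_j\chi(L_j)$ roughly, there is a level $L_t$ with large chromatic number and $t\ge 6$ (small levels carry few colors). Inside $L_t$, choose a component $D$ of large chromatic number. Pick $q_2$ adjacent via a covering vertex $b\in B$ to appropriate parts, and take $C'$ to be a subset of $D$ minus the neighborhood $N^5[q_2]$. The removed neighborhood is bipartite, since the odd girth is at least $15$, so it costs only a constant number of colors; this is where the constant $5$ comes from. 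The odd path runs from $q_1$ down the BFS tree to $q_2$. Its parity is forced odd by choosing between $q_2$ and a neighbor, since $N^k$ balls are bipartite. The new cover is $B'=B\cap N(C')$, restricted to vertices far from the path.

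For the even path, a second route to $q_2$ is needed: one ascending via a vertex $b\in B$ adjacent to both sides. Since $B$ covers $C$, one can detour through $B$ near $q_2$. This explains why only the part within $N^3[q_{i+1}]$ and $N^2$ may leave $C$. Ensuring both paths remain induced together and anticomplete to $C'$ costs the extra $6$ colors, namely further bipartite and level deletions.

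I expect the main obstacle to be exactly this even-path construction. It must guarantee that every mix $Q_{1,h_1}\cup\cdots$ is induced while producing opposite parities, and the odd-girth-$15$ bipartiteness of $N^7$ balls is what should make the parity analysis work.
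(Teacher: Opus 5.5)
Your architecture matches the paper's: induction on $r$ using two one-segment lemmas costing $5$ and $11$ colors (Lemmas~\ref{lemma:construction_quasi} and~\ref{lemma:contruction_two_paths_optimal}). However, the step that carries the real content of this lemma, your \textbf{Induced paths} bullet, fails as argued.

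Later segments do \emph{not} live in $C_1\cup\{q_2\}$. As you note yourself when sketching the even path, a segment $j$ may pass through covering vertices $y\in B_{j-1}\subseteq B_1$ near $q_{j+1}$. The anticompleteness of $C_1$ says nothing about such $y$. Moreover, $B_{j-1}$ is only guaranteed anticomplete to an earlier segment $i$ \emph{outside} $N^3[q_{i+1}]$. So a chord $xy$ with $x$ on segment $i$ inside $N^3[q_{i+1}]$ is excluded by nothing you wrote.

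The single-segment conclusion you propose (the bullets of the present lemma with $r=1$) is also too weak to exclude it. The bullet $(V(Q_{i,1})\cup V(Q_{i,2}))\setminus N^2[q_{i+1}]\subseteq C\cup\{q_1\}$ says nothing about where the vertices inside $N^2[q_{i+1}]$ lie. You need the stronger property of the paper's one-segment lemmas: every vertex of the new segment lies in $C_{j-1}\cup\{q_j\}\cup((B_{j-1}\cap N^2[q_{j+1}])\setminus N^4[q_j])$.

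With that property, the missing argument is a distance and odd-girth one.
\begin{itemize}
\item A chord $xy$ with $y\in B_{j-1}$ on segment $j$ and $x$ on segment $i<j$ forces $x\in N^3[q_{i+1}]$, hence $d(y,q_{i+1})\le 4$.
\item If $j=i+1$, this contradicts $y\notin N^4[q_j]$.
\item If $j>i+1$, first show that $y$ has a path-neighbour in $C_{j-1}$. Otherwise $y\in N^2(q_{j+1})$ and both path-neighbours of $y$ lie in $B\cap N^2[q_{j+1}]$. One of them, $z$, lies in $N^2(q_{j+1})$. Common neighbours of $y,q_{j+1}$ and of $z,q_{j+1}$ then close a walk of length $5$, contradicting the odd girth.
\item Hence $d(q_{i+1},C_{j-1})\le 5$, contradicting the distance-$6$ guarantee.
\end{itemize}

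This argument needs the intermediate data $C_{j-1}$ and $d(q_{i+1},C_{j-1})\ge 6$. A black-box induction hypothesis applied to $(B_1,C_1,q_2)$ does not expose these. That is why the paper peels off the \emph{last} segment instead: the induction hypothesis supplies $B_{r-1}$, $C_{r-1}$ and the distance bounds for all earlier $q_{i+1}$, and the one-segment lemma supplies the strong containment for the new segment.

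This ordering also spares you the ``shifted parity pattern''. As you set it up, that pattern is not an instance of the statement being proved, so you would have to generalize the lemma to arbitrary sets of even-required indices.

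Separately, your sketch of the one-segment step does not justify the constants $5$ and $11$. In the paper they arise as $2$ colors for a bipartite ball around $q$ plus $3$ or $9$ colors lost in stable-grading arguments. For this lemma, however, it suffices to invoke those lemmas with their actual conclusions.
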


Let us first explain how the above statement yields our main claim.

\begin{proof}[Proof of \Cref{thm:quasi_rope_existence}]
    Let $G$ be a graph with odd girth at least 15 and let $X \subseteq V(G)$ with 
    \[\chi(X) \geq \begin{cases}
        16r + 5 & \text{if } r \text{ is even,}\\
        16r + 11 & \text{if } r \text{ is odd.}
    \end{cases}\]
Without loss of generality, we may assume that $X$ is connected as otherwise we can choose a connected component of $X$ with maximum chromatic number. 
Let $v$ be some vertex of $X$ and let $L_i = N^i_X(v)$ for all $i \geq 0$. Since $G$ has odd girth at least 15, there is an integer $s \geq 6$ such that $\chi(L_{s + 1}) \geq \lceil \chi(X) /2 \rceil$.
Hence if $r$ is even, we have
\[
\chi(L_{s + 1}) \geq \left\lceil \frac{\chi(X)}{2} \right \rceil \geq 8r + 3 = 11 \left \lceil \frac{r}{2} \right \rceil + 5 \left \lfloor \frac{r}{2} \right \rfloor + 3,
\]
and if $r$ is odd, we have
\[
\chi(L_{s + 1}) \geq \left\lceil \frac{\chi(X)}{2} \right \rceil \geq 8r + 6 = 11 \left \lceil \frac{r}{2} \right \rceil + 5 \left \lfloor \frac{r}{2} \right \rfloor + 3.
\]

Let $C$ be the set of vertices of a connected component of $G[L_{s + 1}]$ with maximum chromatic number. Let $q_1$ denote a vertex of $L_s$ that is adjacent to some vertex of $C$ and let $B = L_s$. \Cref{lemma:construction_almost_broken} implies that there exists $B' \subseteq B, C' \subseteq C$, and a good broken quasi-$r$-arithmetic rope consisting of ends $q_1$ and $q_2, \dots, q_{r + 1} \in C \setminus C'$ and paths $Q_{i, 1}$ and $Q_{i, 2}$ for each $i \in [r]$ such that 
\begin{itemize}
    \item $G[C' \cup \{q_{r+ 1} \}]$ is connected,
    \item $\chi(C') \geq 3$,
    \item $B'$ covers $C'$,
    \item $B'$ is anticomplete to $(V(Q_{i, 1}) \cup V(Q_{i, 2})) \setminus N^3[q_{i + 1}]$ for   all $i \in [r]$,
    \item $V(Q_{i, 1}) \cup V(Q_{i, 2})$ is contained in $C \cup \{q_1\} \cup (B \cap N^2[q_{i + 1}])$ for all $i \in [r]$,
    \item $C'$ is anticomplete to $\left( \bigcup_{i = 1}^r (V(Q_{i, 1}) \cup V(Q_{i, 2})) \right) \setminus \{q_{r + 1}\}$, and
    \item the $G$-distance between $\{q_1, \dots, q_{r}\}$ and $C' \cup \{q_{ r+  1}\}$ is at least $6$. 
\end{itemize}
Since $\chi(C') \geq 3$ and $\chi(N^5_G[q_{r + 1}]) \leq 2$, there exists a vertex $x \in C'$ with $G$-distance at least $6$ from $q_{r + 1}$. Since $x \in C'$, it also has $G$-distance at least $6$ from $q_1, \dots, q_r$. As $B'$ covers $C'$, there is a vertex $b \in B'$ adjacent to $x$. 

Let $P_1$ be a shortest path between $q_{r + 1}$ and $b$ in $G[\{q_{r + 1}, b \} \cup C']$. Observe that this is an induced path.
Now let $a_1 \in L_{s-1}$ be a vertex adjacent to $q_1$ and let $a_2 \in L_{s - 1}$ be a vertex adjacent to $b$.
Let $P_2$ be an induced path between $a_1$ and $a_2$ in $G\left[ \{a_1, a_2\} \cup \bigcup_{i = 0}^{s - 2} L_i \right]$. We claim that $ba_2P_2a_1q_1$ is again an induced path. It suffices to show that $\{b , a_2\}$ is anticomplete to $\{a_1, q_1\}$ since $V(P_2) \setminus \{a_1, a_2\} \subseteq \bigcup_{i = 0}^{s - 2} L_i$. This follows because the $G$-distance between vertices in $\{b, a_2\}$ and $x$ is at most $2$, but the $G$-distance between vertices in $\{a_1, q_1\}$ and $x$ is at least $5$.

Now let $P$ be the induced path $P_1ba_2P_2a_1q_1$ between $q_{r + 1}$ and $q_1$. To prove that $P$ is in fact an induced path, we only need to show that $V(P_1) \setminus \{b\}$ is anticomplete to $\{q_1\}$ since $V(P_1) \setminus \{b\} \subseteq C \subseteq L_{s + 1}$ and $V(P_2) \subseteq L_{s - 1}$. This holds true because $V(P_1) \setminus \{b\} \subseteq C' \cup \{q_{r + 1}\}$ and the $G$-distance between $q_1$ and $C' \cup \{q_{r + 1}\}$ is at least 6.

Finally, we show that $V(P) \setminus \{q_1, q_{r + 1}\}$ is anticomplete to $\bigcup_{i = 1}^r (V(Q_{i, 1}) \cup V(Q_{i, 2})) \setminus \{q_1, q_{r + 1}\}$. For $V(P_1) \setminus \{q_{r + 1}, b\} \subseteq C'$, the claim follows directly from \Cref{lemma:construction_almost_broken}. Since $V(P_2) \setminus \{a_1, a_2\} \subseteq \bigcup_{i = 0}^{s - 2} L_i$ and $V(P_1) \subseteq L_s \cup L_{s + 1}$, we only need to show that $\{a_1, a_2, b\}$ is anticomplete to $\bigcup_{i = 1}^r (V(Q_{i, 1}) \cup V(Q_{i, 2})) \setminus \{q_1, q_{r + 1}\}$. 

Since $a_1, a_2 \in L_{s - 1}$ and $(V(Q_{i, 1}) \cup V(Q_{i, 2})) \setminus\{q_1\} \subseteq C \cup (B \cap N^2[q_{i + 1}])$ for all $i \in [r]$, $a_1$ and $a_2$ may only have a neighbor in $B \cap N^2[q_{i + 1}]$ along $Q_{i, 1} \cup Q_{i, 2}$. Since $a_1$ and $a_2$ each have $G$-distance at least $4$ to $\{q_2, \dots, q_{r + 1}\}$, the respective anti-completeness condition follows.
Since $b \in B'$ and $B'$ is anticomplete to $(V(Q_{i, 1}) \cup V(Q_{i, 2})) \setminus N^3[q_{i + 1}]$ for all $i \in [r]$, the respective anti-completeness condition follows because $b$ has $G$-distance at least $5$ to $\{q_1, \dots, q_{r + 1}\}$.

Hence we obtain a quasi-$r$-arithmetic rope from the good broken quasi-$r$-arithmetic rope by extending $Q_{r, 1}$ and $Q_{r,2}$ with $P$. Note that we might need to relabel $Q_{r, 1}$ and $Q_{r, 2}$ depending on the parity of $P$, which is possible because the broken quasi-$r$-arithmetic rope is good.
\end{proof}

The broken quasi-arithmetic ropes in \Cref{lemma:construction_almost_broken} are constructed inductively.
Each step is based on the following result, which can be seen as a refinement of \cite[Lem. 4.4]{chudnovsky2024colouringtperfectgraphs}.

\begin{lemma}\label{lemma:contruction_two_paths_optimal}
    Let $c \geq 1$ and let $G$ be a graph with odd girth at least $13$, let $B, C \subseteq V(G)$ be disjoint vertex subsets with $B$ covering $C$, let $q \in V(G) \setminus C$ be a vertex such that $G[C \cup \{q\}]$ is connected and $\chi(C)\geq c+11$.

    Then there exists $B' \subseteq B, C' \subseteq C, q' \in C \setminus C'$ and two induced paths $Q_1, Q_2$ between $q$ and $q'$ in $G[(C \setminus C') \cup (B \setminus B') \cup \{q\}]$ such that 

    \begin{itemize}
        \item $G[C' \cup \{q'\}]$ is connected,
        \item $\chi(C') \geq c$, 
        \item $B'$ covers $C'$,
        \item $B'$ is anticomplete to $(V(Q_1) \cup V(Q_2)) \setminus N^3[q']$,
        \item $V(Q_1) \cup V(Q_2)$ is contained in $C \cup \{q\} \cup ((B \cap N^2[q']) \setminus N^4[q])$,
        \item $C'$ is anticomplete to $(V(Q_1) \cup V(Q_2)) \setminus \{q'\}$,
        \item the $G$-distance between $q$ and $C' \cup \{q'\}$ is at least $6$, and
        \item $Q_1$ has odd length and $Q_2$ has even length.
    \end{itemize}
\end{lemma}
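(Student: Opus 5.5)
We plan to follow the "levelling" strategy of \cite[Lem.~4.4]{chudnovsky2024colouringtperfectgraphs}, but we perform only one global distance levelling from $q$ inside $G[C\cup\{q\}]$, not a recursive one. Let $D_j$ be the set of vertices of $C$ at distance exactly $j$ from $q$ in $G[C \cup \{q\}]$. Since the odd girth is at least $13$, the ball $N^5_G[q]$ is bipartite. Hence $\chi(D_1\cup\dots\cup D_5)\le 2$, and the remaining levels $D_6,D_7,\dots$ have chromatic number at least $c+9$.

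The first step is to choose one level $D_k$ with $k\ge 6$ whose chromatic number is large, say $\chi(D_k)\ge c+9$ after accounting for the parity split of odd and even levels. We then take a connected component $K$ of $G[D_k\cup D_{k+1}\cup\cdots]$ of large chromatic number. Its vertices at level $k$ are reached from $q$ by induced shortest paths through $D_1,\dots,D_{k-1}$, and all such paths have length congruent to $k$ modulo $2$. These paths supply the odd-length path $Q_1$.

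The key step is producing the second path, of the opposite parity. We pick a candidate endpoint $q'\in D_k$ and form $Q_2$ by detouring through $B$. Take a vertex $y\in D_{k-1}$ or $D_k$ with a neighbor $b\in B\cap N^2[q']$. Using that $B$ covers $C$ and that $N^2[q']$ is bipartite, the path $q\ldots y\,b\,z\ldots q'$ through a neighbor $b\in B$ of two vertices $y,z$ at consecutive levels changes the parity by one. This is exactly where $Q_2\setminus C\subseteq (B\cap N^2[q'])\setminus N^4[q]$ comes from; the exclusion of $N^4[q]$ holds because $k\ge 6$.

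Finally we set $B'=B\setminus N^3_G[V(Q_1)\cup V(Q_2)]$ restricted appropriately, and let $C'$ be a component of $C\setminus N^{\le 1}[\text{paths}\setminus\{q'\}]$, taken together with the part far from $q$. Removing the ball around the paths costs only a bounded number of colors, since each removed region lies inside a bipartite ball $N^5[\cdot]$ around $q'$ or inside earlier levels. Removing the vertices within distance $5$ of $q$ also costs at most $2$ colors. We also choose $q'$ so that $C'$ remains adjacent to $q'$, making $G[C'\cup\{q'\}]$ connected; this is done by taking $q'$ to be the attachment vertex of the high-chromatic component. Budgeting the losses, we expect $2$ for levels $1$--$5$, a factor of $2$ for choosing the parity class, and a few bipartite balls for the deletions around $q'$ and the detour, all adding up to $11$.

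The main obstacle is this color accounting. We must ensure that $B'$, after deleting the neighbors of the paths outside $N^3[q']$, still covers a subset $C'$ with $\chi(C')\ge c$. Vertices of $C$ whose only $B$-neighbors were deleted must be discarded, and we need to show that they lie within a bounded number of bipartite balls. We would first try to show that every such vertex lies within distance $5$ of $q'$ or in levels below $k$, so that it is absorbed into an $N^5[q']$ ball costing $2$ colors.
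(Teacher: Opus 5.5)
There is a genuine gap, and it is where you place the ``main obstacle'': you build the paths first and then try to clean up $B$ and $C$. That cleanup cannot be paid for with an additive constant.

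\textbf{Covering fails.} Your paths have length about $k$, which is unbounded, and $B'$ must avoid every $B$-neighbour of every path vertex outside $N^3[q']$. Since $B$-vertices are shortcuts relative to the levelling of $G[C\cup\{q\}]$, a vertex $x\in C$ at an arbitrarily high level may have all its $B$-neighbours adjacent to one path vertex $p$ that is far from $q'$ and at a much lower level. Such an $x$ must be discarded, but it need not lie in $N^5[q']$ or below level $k$. The discarded set is only contained in $\bigcup_{p} N^2[p]$ over all path vertices $p$. That is a union of unboundedly many bipartite balls, so nothing bounds its chromatic number.

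\textbf{Other steps fail too.} Restricting to one level or one parity class loses a factor of $2$, which an additive budget $c+11$ cannot absorb; this is exactly what made the original bound exponential. Your parity switch needs a $B$-vertex adjacent to two consecutive levels, and such a vertex need not exist: every $B$-vertex may have all its $C$-neighbours in a single level. Also, nothing ensures that the endpoint $q'$ both admits paths of both parities and is the attachment vertex of the high-chromatic remainder.

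\textbf{The missing idea.} Let an ordering choose the endpoint, so that everything in $C'$ is automatically ``later'' than the paths. The paper does this as follows.
\begin{itemize}
\item It fixes a BFS order $m_1,m_2,\dots$ of the levels of $G[C\cup\{q\}]$.
\item For each $t$, it splits $B$ into $B_0^t$ (no neighbour in $\mathcal{M}^{\le t-3}$) and $B_1^t,B_2^t$ (by parity of the distance to $q$ through $\mathcal{M}^{\le t-3}$). It splits $C\setminus N^5_G[q]$ correspondingly into $C_0^t,C_1,C_2$.
\item It arranges these vertices into a stable grading by the first $m_i$ they see: through a $B$-neighbour for $C_1\cup C_2$, and directly for $C_0$.
\item A left-active argument then applies. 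Either the left-active vertices have large $\chi$, and one takes the earliest vertex of a best component. Or an odd cycle among the remaining vertices of one class, oriented from earlier to later, would consist only of sources and sinks, a contradiction.
\item With the triangle-free refinement, this yields an edge $u'q'$ inside one class and a connected $C'$ with $\chi(C')\ge c$. All vertices of $C'$ come after $u',q'$, and none is adjacent to $u'$. The cost is an additive $9$, plus $2$ for $N^5_G[q]$.
\end{itemize}
The edge $u'q'$ inside a class gives the two parities: the path to $q'$ versus the path to $u'$ followed by $u'q'$. Being later means no vertex of $C'$ is adjacent to, or has a $B$-neighbour adjacent to, any of $m_1,\dots,m_k$. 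Hence $B'=B_0^t$ or $B'=B\setminus N(\{m_1,\dots,m_k\})$ covers $C'$, and $C'$ is anticomplete to the paths, with no deletion cost at all.
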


For segments of the broken quasi-arithmetic rope that don't require an even-length path, we make use of the following variant that requires a weaker bound on the chromatic number at the cost of a slightly higher odd girth.

\begin{lemma}\label{lemma:construction_quasi}
    Let $c \geq 1$ and let $G$ be a graph with odd girth at least $15$, let $B, C \subseteq V(G)$ be disjoint vertex subsets with $B$ covering $C$, let $q \in V(G) \setminus C$ be a vertex such that $G[C \cup \{q\}]$ is connected and $\chi(C)\geq c+5$.

    Then there exists $B' \subseteq B, C' \subseteq C, q' \in C \setminus C'$ and an induced odd-length path $Q_1$ between $q$ and $q'$ in $G[(C \setminus C') \cup (B \setminus B') \cup \{q\}]$ such that 

    \begin{itemize}
        \item $G[C' \cup \{q'\}]$ is connected,
        \item $\chi(C') \geq c$, 
        \item $B'$ covers $C'$,
        \item $B'$ is anticomplete to $V(Q_1) \setminus N^3[q']$,
        \item $V(Q_1)$ is contained in $C \cup \{q\} \cup ((B \cap N^2[q']) \setminus N^4[q])$,
        \item $C'$ is anticomplete to $V(Q_1) \setminus \{q'\}$, and
        \item the $G$-distance between $q$ and $C' \cup \{q'\}$ is at least $6$.
    \end{itemize}
\end{lemma}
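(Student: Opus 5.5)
The plan is to build $Q_1$ in two parts: a prefix that is a shortest path from $q$ inside $G[C \cup \{q\}]$, and a short tail that fixes the parity. The proof is a budget argument: the hypothesis $\chi(C) \ge c+5$ should pay for discarding two bipartite pieces and one stable set. The tool throughout is that $N^6_G[v]$ is bipartite for every $v$, since the odd girth is at least $15 = 2\cdot 6+3$.

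\emph{Step 1 (leave the neighbourhood of $q$).} Set $C_0 = C \setminus N^k_G[q]$ for a suitable $k \le 6$ (I expect $k=5$ or $6$). Then $\chi(C_0) \ge c+3$, because $C \cap N^6_G[q]$ is bipartite. Let $D$ be a component of $G[C_0]$ of maximum chromatic number. Since $G[C \cup \{q\}]$ is connected, there is a shortest path $P$ from $q$ to $D$ in $G[C\cup\{q\}]$. Let $z$ be its last vertex, with $z \in D$. Being shortest, $P$ is induced, and all of its vertices except $z$ lie in $C \cap N^{k}_G[q]$ (or equal $q$).

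\emph{Step 2 (fix the parity).} If $P$ has odd length, put $q' = z$ and $Q_1 = P$. Otherwise, extend $P$ by one edge $zz'$ with $z' \in D$ at distance $k+1$ from $q$, and put $q'=z'$. If such a $z'$ is not available, detour through a vertex of $B$ adjacent to $z$; this is why $B \cap N^2[q']$ is permitted in $Q_1$. The tail then lies in $N^2[q']$ and outside $N^4[q]$. Inducedness must be checked here: the prefix is shortest, and a chord from the tail back to the prefix would create either a shortcut or a short odd cycle inside $N^6[q]$.

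\emph{Step 3 (choose $C'$ and $B'$).} Remove from $D$ the bipartite set $N^5_G[q'] \cap D$, at a cost of $2$ colours. Next remove one further stable set, at a cost of $1$ colour, namely the vertices that would lose their $B$-coverage. Finally take a component of what remains with chromatic number at least $c$, and ensure it lies next to $q'$ (using connectivity of $D$) so that $G[C' \cup \{q'\}]$ is connected. Define
\[
B' = \{\,b\in B : b \text{ has a neighbour in } C' \text{ and no neighbour in } V(Q_1)\setminus N^3[q']\,\}.
\]
Then $C'$ is anticomplete to $V(Q_1)\setminus\{q'\}$. Indeed, the tail lies in $N^2[q']$ and $C'$ avoids $N^5[q']$, while the prefix lies in $N^k[q]$ and $C'\subseteq C_0$. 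The distance from $q$ to $C'\cup\{q'\}$ is at least $6$ by the choice of $k$.

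\emph{Main obstacle.} The main obstacle is proving that $B'$ still covers $C'$. A vertex $x\in C'$ is covered by $B'$ only if some $B$-neighbour $b$ of $x$ is not adjacent to a prefix vertex outside $N^3[q']$. Such a $b$ is within distance $2$ of $x$, and also within distance about $k+1$ of $q$. I would try to show the following: if every $B$-neighbour of $x$ is blocked in this way, then $x$ lies in a single distance class from $q$ (or from $z$). Such a class is stable, and removing it costs the remaining one colour, which gives the total $2+2+1=5$. Making this parity and distance argument work is the step that uses the extra odd-girth slack ($15$ rather than $13$), since a blocking configuration closes a cycle of length at most about $2(k+2)+1$. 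If the count fails, my fallback is to enlarge $k$ and pay from the bipartite-neighbourhood budget instead.
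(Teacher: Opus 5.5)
There is a genuine gap, and the plan as written is internally inconsistent. In Step 3 you take $C'\subseteq D\setminus N^5_G[q']$, which is how you get anticompleteness to the tail. But then $q'$ has no neighbour in the nonempty set $C'$, so $G[C'\cup\{q'\}]$ is disconnected, whatever you do with the connectivity of $D$. This conclusion is not cosmetic. In the proof of \Cref{lemma:construction_almost_broken} the pair $(C',q')$ becomes the next $(C,q)$, and the lemma is reapplied under the hypothesis that $G[C\cup\{q\}]$ is connected.

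Second, the step you single out as the main obstacle is the real content of the lemma, and your suggested resolution does not work. Your prefix $P$ is a shortest path in $G[C\cup\{q\}]$, not in $G$, so it is not confined to $N^k_G[q]$. $G$-distances may run through $B$ (in the application, $B$ is the previous BFS level), so $P$ can be much longer than $k$ and can pass through other components of $G[C\setminus N^k_G[q]]$. Vertices of $D$ may then have all their $B$-neighbours attached to deep vertices of $P$. Two adjacent such vertices attached to $p_j$ and $p_{j'}$ only close a walk of length $|j-j'|+5$, which contradicts the odd girth only when $|j-j'|$ is even and at most $8$. So nothing forces the blocked set into one distance class or into a stable set. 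For the same reason, $C'\cap N^k_G[q]=\emptyset$ does not make $C'$ anticomplete to the prefix.

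What is missing is a way to control, for each vertex of $C$, where its $B$-neighbours attach to the BFS structure of $G[C\cup\{q\}]$. The paper does this as follows:
\begin{itemize}
\item It takes the BFS layers $M_i$ of $G[C\cup\{q\}]$ with a BFS order $m_1,m_2,\ldots$.
\item It splits $B$ into $B_0^t,B_1^t,B_2^t$ according to whether, and with which parity, a vertex attaches to $\mathcal{M}^{\le t-3}$.
\item It builds a stable grading of $C\setminus N^5_G[q]$ in which a vertex comes earlier the smaller the index of the first $m_i$ it reaches (through a $B$-neighbour for $C_1\cup C_2$, directly for $C_0$).
\item After discarding the bipartite set $N^6_G[q]$ (two colours), the left-active \Cref{lemma2:quasi} (three colours) gives a connected $\widehat C$ with $\chi(\widehat C)\ge c$ all of whose vertices are later than the chosen endpoint. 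That endpoint is either a vertex $\hat q$ with $G[\widehat C\cup\{\hat q\}]$ connected, or an edge $u'q'$ where $u'$ has no neighbour in $\widehat C$.
\end{itemize}
Being later is what simultaneously gives three things: $B'=B\setminus N(\{m_1,\ldots,m_k\})$ (or $B_0^t$) covers $C'$; $C'$ is anticomplete to the path, which runs through $m_1,\ldots,m_k$, the early endpoint(s), and at most one vertex of $B$; and the required connectivity holds.

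Parity is read off from the classes $C_0,C_1,C_2$. The one bad case, $\hat q\in C_0\cap M_t$ with $t$ even, is handled by stepping back to the BFS parent $\hat u\in M_{t-1}$. Keeping $\hat u$ at distance at least $6$ from $q$ is why $N^6_G[q]$ rather than $N^5_G[q]$ is discarded, and why odd girth $15$ is needed. Your $2+2+1$ budget contains no such ordering mechanism, so nothing pays for the coverage condition.
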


We close this section by showing how \Cref{lemma:construction_almost_broken} is obtained from the above two lemmas.
Their proofs will be discussed in the next two sections.

\begin{proof}[Proof of \Cref{lemma:construction_almost_broken}]

    We prove the first claim by induction and explain how it implies the second.
    The base case $r = 1$ follows from \Cref{lemma:construction_quasi}. Now let $r \geq 2$ and assume that the claim holds up to $r - 1$, i.e., there exists $B_{r-1} \subseteq B, C_{r - 1} \subseteq C, q_2, q_3, \dots, q_r \in C \setminus C_{r- 1}$, and an alternating broken quasi-$(r-1)$-arithmetic rope $(q_i, Q_{i, 1}, Q_{i, 2})_{i = 1}^{r - 1}$ satisfying the above properties with $c' =  5 + c$ if $r$ is odd or with $c' = 11 + c$ if $r$ is even.
    
    Depending on the parity of $r$, \Cref{lemma:contruction_two_paths_optimal} or \Cref{lemma:construction_quasi} with $B_{r-1}, C_{r-1}$, and $q_r$ implies that there are $B' \coloneqq B_r, C' \coloneqq C_r \subseteq C_{r - 1}, q_{r + 1} \in C_{r - 1} \setminus C_r$, and two induced paths $Q_{r, 1}, Q_{r,2}$ between $q_r$ and $q_{r + 1}$ in $G\left[ (C_{r - 1} \setminus C_r ) \cup (B_{r - 1} \setminus B_r) \cup \{q_{r}\} \right]$ satisfying the properties of the respective lemma. (Note that $Q_{r, 2}$ does not have to be an even-length path if $r$ is odd). 
    It is easy to check that the properties listed above are satisfied. We only need to verify that $(q_i, Q_{i, 1}, Q_{i, 2})_{i = 1}^r$ is in fact an alternating broken quasi-$r$-arithmetic rope. It remains to check that the induced path condition holds since the rest is immediate.
    
    To this end, suppose for contradiction that there is some $(h_1, \dots, h_r)$ such that $Q_{1, h_1} \cup Q_{2, h_2 } \cup \dots \cup Q_{r, h_r}$ is not an induced path.
    By the induction hypothesis, $Q_{1, h_1} \cup Q_{2, h_2 } \cup\dots \cup Q_{r - 1, h_{r - 1}}$ is an induced path. Hence there is an edge between two vertices $x \in V(Q_{i, h_{i}})$ and $y \in V(Q_{r, h_r}) \setminus \{q_r\}$ for some $i \in [r - 1]$. Since $C_{r - 1}$ is anticomplete to $(V(Q_{i, 1}) \cup V(Q_{i, 2})) \setminus \{q_r\}$, we have that $y \in (B_{r - 1} \cap N^2[q_{r + 1}]) \setminus N^4[q_r]$. Since $B_{r - 1}$ is anticomplete to $(V(Q_{i, 1}) \cup V(Q_{i, 2})) \setminus N^3[q_{i + 1}]$, we have that $x \in N^3[q_{i + 1}]$. Hence there is a path of length at most 4 between $y$ and $q_{i + 1}$, which implies that $y \neq q_{r + 1}$. 
    
    Now suppose that $y$ does not have a neighbor in $C_{r - 1} \cap V(Q_{r,    h_r})$. Note that this implies that $y \in N^2(q_{r+1})$ since $q_{r + 1} \in C_{r - 1}$. Since $V(Q_{r, h_r}) \subseteq C_{r - 1} \cup \{q_r\} \cup (B \cap N^2[q_{r + 1}])$, $y$ has two neighbors in $V(Q_{r, h_r}) \cap (B \cap N^2[q_{r + 1}])$, which we denote as $z_1$ and $z_2$. At most one of these can be adjacent to $q_{r +1 }$ since $V(Q_{r, h_r})$ is an induced path. Without loss of generality, we may say that $z_1 \in N^2(q_{r + 1})$. Since $G$ is triangle-free, there is a vertex $b_{y}$ adjacent to $y$ and $q_{r + 1}$ and a vertex $b_{z_1}$ adjacent to $z_1$ and $q_{r + 1}$. However, together with $q_{r + 1}$ these vertices form a cycle of length 5, which contradicts that the odd girth of $G$ is at least $7$. Thus $y$ has a neighbor in $C_{r - 1} \cap V(Q_{r, h_r})$.
    
    Therefore if $i < r - 1$, there is a path of length at most 5 from $q_{i + 1}$ to a vertex in $C_{r - 1}$, which contradicts the induction hypothesis that the $G$-distance between $q_{i + 1}$ and $C_{r - 1}$ is at least 6. If $i = r - 1$, the $G$-distance between $q_r$ and $y$ is at most 4, which is again a contradiction.

    The second claim for good broken quasi-$r$-arithmetic ropes is equivalent to the first if $r$ is even. For odd $r$, the second claim follows from the first for $r-1$ and \Cref{lemma:contruction_two_paths_optimal} by arguing exactly as above. 
\end{proof}

\section{Constructing segments with odd and even paths}
\label{sec:segments_odd_and_even}

\begin{definition}
    A stable grading is a sequence $(W_1, \dots, W_n)$ of pairwise disjoint stable sets of $V(G)$ such that their union is $V(G)$.
    
    A vertex $u \in V(G)$ is earlier than $v \in V(G)$ with respect to a stable grading $(W_1, \dots, W_n)$ if $u \in W_i$ and $v \in W_j$ for some $i < j$.
\end{definition}

We say that a vertex $w \in V(G)$ is left-active with respect to a vertex set $X$ if there exists an edge $uv$ of $G[X]$ such that $u$ and $v$ are earlier than $w$ and $w$ is adjacent to at least one of $u$ and $v$.

\begin{lemma}\label{lemma1:beyond_leveling}
    Let $c \geq 1$ and let $G$ be a graph with $\chi(G) \geq c + 8$ and $V(G) = C_0 \cup C_1 \cup C_2$.
    Furthermore, let \((C_0^t)_{t\geq 1}\) be a partition of \(C_0\) such that the ends of every edge of \(G[C_0]\) lie in parts whose indices differ by at most one.

    If $(W_1, \dots, W_n)$ is a stable grading of $V(G)$, there exists a subset $X$ of $V(G)$ and an edge $uv$ belonging to one of $G[C_1], G[C_2]$, or $G[C_0^t]$, for some $t \geq 1$, such that
    \begin{itemize}
        \item $G[X]$ is connected,
        \item $\chi(X) \geq c$,
        \item $u$ and $v$ are earlier than every vertex in $X$, and
        \item at least one of $u$ and $v$ has a neighbor in $X$.
    \end{itemize}
        \end{lemma}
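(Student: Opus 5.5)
The plan is to split $V(G)$ into the vertices that are ``activated'' by some edge lying inside one of the sets $C_1$, $C_2$, $C_0^t$, and the rest. We then show the rest has chromatic number at most $8$, and extract $X$ as a component of the activated part.

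Call an edge \emph{admissible} if it belongs to $G[C_1]$, $G[C_2]$, or $G[C_0^t]$ for some $t \ge 1$. Call $w$ \emph{active} if some admissible edge $uv$ has both ends earlier than $w$ and $w$ adjacent to $u$ or $v$; such a $uv$ is a \emph{witness} for $w$. Let $A$ be the set of active vertices and $Y = V(G)\setminus A$.

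\textbf{Step 1: $\chi(Y)\le 8$.} Fix one of the sets $D \in \{C_1, C_2\} \cup \{C_0^t : t\ge 1\}$. Let $S_D$ be the set of vertices of $D$ with an earlier neighbor in $D$.
\begin{itemize}
\item $D\setminus S_D$ is stable. Indeed, for an edge $xy$ of $G[D]$ with $x$ earlier than $y$, the vertex $y$ lies in $S_D$.
\item $S_D\cap Y$ is stable. Suppose $x,y \in S_D\cap Y$ are adjacent with $x$ earlier than $y$. Then $x$ has an earlier neighbor $z\in D$. The edge $zx$ is admissible, both its ends are earlier than $y$, and $y$ is adjacent to $x$, so $y$ would be active.
\end{itemize}
Hence $G[D\cap Y]$ is $2$-colorable, which gives $2$ colors each for $C_1\cap Y$ and $C_2\cap Y$. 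For $C_0\cap Y$, color $C_0^t\cap Y$ with colors $\{1,2\}$ if $t$ is odd and $\{3,4\}$ if $t$ is even. Every edge of $G[C_0]$ either lies inside a single part $C_0^t$ or joins parts with consecutive indices, which receive disjoint palettes, so this coloring is proper. Using disjoint palettes for $C_1$, $C_2$ and $C_0$ gives $\chi(Y)\le 2+2+4=8$, and therefore $\chi(A)\ge c$.

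\textbf{Step 2: choosing $X$ and $uv$.} Let $K$ be the vertex set of a component of $G[A]$ with $\chi(K)\ge c$, and set $X=K$; then $G[X]$ is connected and $\chi(X)\ge c$. For a witness $uv$, let $m(uv)$ be the larger of the grading indices of $u$ and $v$. Among all witnesses of all vertices of $K$, choose $uv$ with $m(uv)=j$ minimal. Every $w\in K$ has a witness $e$ whose ends are both earlier than $w$, so the index of $w$ exceeds $m(e)\ge j$. Thus every vertex of $K$ lies in some $W_i$ with $i>j$, and both $u$ and $v$ are earlier than every vertex of $X$. Finally, $uv$ is a witness for some vertex of $K$, so $u$ or $v$ has a neighbor in $X$.

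The main obstacle I expect is Step 1, the coloring bound of $8$. The subtle point is that a witness edge may have ends outside $Y$, and I use exactly this when $z\notin Y$ in the second bullet. Inter-level edges of $C_0$ are handled by the odd/even palette split. Step 2 is the minimal-witness trick and should be routine.
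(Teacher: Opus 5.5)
Your proof is correct and follows essentially the same route as the paper. It uses the same set $A$ of left-active vertices, the same bound $2+2+4$ on the complement (per-class bipartiteness plus the odd/even split of the parts $C_0^t$), and the same minimality-in-the-grading choice of a component of $G[A]$. The only difference is cosmetic: you exhibit the bipartition of each $D\cap Y$ explicitly (vertices with or without an earlier neighbour in $D$), whereas the paper argues by contradiction that an odd cycle, oriented from earlier to later, would contain a directed path of length $2$.
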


\begin{proof}
    Let $A$ be the set of vertices in $G$ that are left-active with respect to $C_1, C_2,$ or $C_0^t$ for some $t \geq 1$. 

    \textbf{Case 1:} $\chi(A) \geq c$.
    Let $D$ be a connected component of $G[A]$ with maximum chromatic number. Note that we have $\chi(D) = \chi(A) \geq c$. Then there is a minimal integer $i$ such that $1 \leq i \leq n$ and $W_i \cap V(D)$ is non-empty. Let $w$ be a vertex in $W_i \cap V(D)$. Since $w$ is left-active, there is an edge $uv$ of $G[C_1], G[C_2]$ or $G[C_0^t]$, for some $t \geq 1$, such that $u$ and $v$ are earlier than $w$ and $w$ is adjacent to one of $u$ and $v$. Since $i$ was chosen to be minimal, $u$ and $v$ are earlier than every vertex in $D$. Thus $X = V(D)$ satisfies the required conditions.

    \textbf{Case 2:} $\chi(A) \leq c - 1$.
    Let $B_h = C_h \setminus A$ for $h \in \{0, 1, 2\}$. We perform a case distinction depending on which $B_h$ has a sufficiently large chromatic number.
    Since 
    \[\chi(B_0 \cup B_1 \cup B_2) \geq \chi(G) - \chi(A) \geq c + 8 - (c -1) = 9,\]
    we either have $\chi(B_0) \geq 5$ or $\chi(B_1 \cup B_2) \geq 5$. In the second case, one of $B_1$ or $B_2$ must have chromatic number at least $3$.

    \textbf{Case 2.1:} $\chi(B_0) \geq 5$.
    Since distinct same-parity parts among the \(C_0^t\)'s are anticomplete in \(G[C_0]\), there is an integer $r \geq 1$ with $\chi(B_0 \cap C_0^{r}) \geq \left\lceil \chi(B_0) / 2 \right\rceil \geq 3$.
    Hence \(G[B_0 \cap C_0^r]\) contains an odd cycle \(D\) as a subgraph. Since each \(W_i\) is stable, the endpoints of every edge of \(D\) lie in different parts of the stable grading. Orient each edge from the earlier endpoint to the later endpoint, and denote the resulting orientation by \(\vec D\).

    Since no vertex in \(D\) is left-active with respect to \(C_0^r\), there is no directed path of length \(2\) in \(\vec D\). Indeed, if \(u \to v \to w\) is such a path, then \(uv\) is an edge of \(G[C_0^r]\), both \(u\) and \(v\) are earlier than \(w\), and \(w\) is adjacent to \(v\), contradicting \(w\in B_0\).
    Hence every vertex of \(\vec D\) is either a sink or a source. Thus the sources and sinks form a bipartition of \(D\), contradicting that \(D\) is an odd cycle. 

    \textbf{Case 2.2:} $\chi(B_h) \geq 3$ for some $h \in \{1, 2\}$.
    The same orientation argument applied to an odd cycle in \(G[B_h]\) gives a contradiction, since no vertex of \(B_h\) is left-active with respect to \(C_h\).
\end{proof}

In the triangle-free case, we can prove the following lemma. Its proof is identical to the proof of~\cite[Lemma 4.3]{chudnovsky2024colouringtperfectgraphs}. We include it for the sake of completeness.

\begin{lemma}\label{lemma2:beyond_leveling}
    Let $c \geq 1$ and let $G$ be a triangle-free graph with $\chi(G) \geq c + 9$ and $V(G) = C_0 \cup C_1 \cup C_2$. Furthermore, let \((C_0^t)_{t\geq 1}\) be a partition of \(C_0\) such that the ends of every edge of \(G[C_0]\) lie in parts whose indices differ by at most one.

    If $(W_1, \dots W_n)$ is a stable grading of $V(G)$, there exists a subset $X$ of $V(G)$ and an edge $uv$ belonging to one of $G[C_1], G[C_2]$, or $G[C_0^t]$, for some $t \geq 1$, such that
    \begin{itemize}
        \item $G[X]$ is connected,
        \item $\chi(X) \geq c$,
        \item $u$ and $v$ are earlier than every vertex in $X$, 
        \item $u$ has no neighbor in $X$, and
        \item $v$ has a neighbor in $X$.
    \end{itemize}
    \end{lemma}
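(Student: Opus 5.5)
We derive this from \Cref{lemma1:beyond_leveling}, applied with $c+1$ in place of $c$ (allowed since $\chi(G) \geq c+9 = (c+1)+8$). This gives a connected set $X$ with $\chi(X) \geq c+1$ and an edge $uv$ in one of $G[C_1]$, $G[C_2]$ or $G[C_0^t]$. Both $u$ and $v$ are earlier than every vertex of $X$, and at least one of them has a neighbor in $X$.

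If exactly one of $u,v$ has a neighbor in $X$, we relabel so that this vertex is $v$ and we are done, even without using the spare color. The remaining case is that both $u$ and $v$ have neighbors in $X$.

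In that case we shrink $X$. Let $N_u = N(u) \cap X$. Since $G$ is triangle-free, $N_u$ is a stable set, so $\chi(X \setminus N_u) \geq \chi(X) - 1 \geq c$. Moreover $N_v \cap N_u = \emptyset$, since a common neighbor of $u$ and $v$ would form a triangle with the edge $uv$. Hence $v$ keeps all its neighbors in $X \setminus N_u$.

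Now $G[X \setminus N_u]$ need not be connected, so we choose a component $D$ carefully. We want $D$ to satisfy $\chi(D) \geq c$ and to contain a neighbor of $v$. Such a component exists in $G[(X\setminus N_u)\cup\{v\}]$, but removing $v$ could again disconnect it.

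The cleaner route is to go back into the proof of \Cref{lemma1:beyond_leveling}, Case 1. There $X = V(D)$ is a component of $G[A]$ of maximum chromatic number, $w \in W_i$ lies in $D$ for the minimal index $i$, and $w$ is adjacent to $v$, say. We then remove $N(u)$ from $A$ at the outset. Let $A' = A \setminus N(u)$, or more precisely delete the stable set $N(u)\cap A$. We take a component of large chromatic number containing a neighbor of $v$, using the fact that every vertex of $A$ is left-active.

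In practice I would argue as follows. Let $D'$ be a component of $G[X\setminus N_u]$ with $\chi(D') \geq c$. Take the earliest vertex $w'$ of $D'$. It is left-active, witnessed by some edge $u'v'$ that is earlier than $w'$, hence earlier than all of $D'$. Then repeat the dichotomy for $u'v'$.

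The obstacle is that this can cycle: both endpoints of $u'v'$ may again have neighbors in $D'$. To avoid this, I would fix the ordering argument. Apply the argument to the set of vertices active with respect to edges whose endpoints are not both adjacent into the same connected piece. Alternatively, do an extremal choice: among all pairs consisting of an edge $uv$ and a connected set $X$ satisfying the conclusion of \Cref{lemma1:beyond_leveling} with $\chi(X)\geq c+1$, choose one with $X$ minimal. The spare color lets us delete $N_u$ and still keep a component of chromatic number at least $c$.

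The crux is showing that this component contains a neighbor of $v$, or else that its earliest vertex yields a witness edge with one endpoint having no neighbor in it. For the latter, triangle-freeness ensures the witness edge's two endpoints have disjoint neighborhoods. Minimality then forces the exact "$u$ none, $v$ some" configuration.
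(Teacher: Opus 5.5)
\textbf{There is a genuine gap: the proposal stops before the one nontrivial case is resolved.} Your setup is right and matches the paper's. You apply \Cref{lemma1:beyond_leveling} with $c+1$, delete a stable neighborhood at the cost of one color using triangle-freeness, and pass to a component $D$ of $G[X \setminus N_u]$ with $\chi(D) \geq c$. The open case is when $v$ has no neighbor in $D$, and none of your remedies closes it:
\begin{itemize}
\item Re-entering the proof of \Cref{lemma1:beyond_leveling} and iterating with a new witness edge can cycle, as you note yourself.
\item The extremal choice of a minimal $X$ is only asserted (``minimality then forces \dots''), with no argument given.
\item Your worry that removing $v$ could disconnect the component is misplaced. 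Since $v$ is earlier than every vertex of $X$, you have $v \notin X$.
\end{itemize}
As written, the proposal does not establish the lemma.

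The missing idea is to add back one deleted vertex and swap the roles of $u$ and $v$. Suppose $v$ has no neighbor in $D$.
\begin{itemize}
\item $X \setminus V(D) \supseteq N_u \neq \emptyset$, so by connectivity of $G[X]$ some vertex of $X \setminus V(D)$ is adjacent to $D$.
\item That vertex cannot lie in $X \setminus N_u$, since it would then belong to the component $D$. So it is some $w \in N_u$.
\item $G[V(D) \cup \{w\}]$ is connected, has chromatic number at least $c$, and $u$ has the neighbor $w$ in it.
\item $v$ has no neighbor in it: $v$ has none in $D$, and $vw \notin E(G)$ because $w$ is adjacent to $u$, $uv$ is an edge, and $G$ is triangle-free.
\end{itemize}
Hence $X := V(D) \cup \{w\}$ works, with $v$ in the role of the endpoint having no neighbor and $u$ in the role of the endpoint having one. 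If instead $v$ has a neighbor in $D$, then $X := V(D)$ works directly. Both endpoints are earlier than everything in the new $X$ because it is a subset of the old one.

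This is exactly the paper's argument. There the deleted neighborhood is that of the endpoint already known to have a neighbor in $X$, which makes your preliminary case (exactly one endpoint has a neighbor) unnecessary.
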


\begin{proof}
By \Cref{lemma1:beyond_leveling}, there exists a subset $X'$ of $V(G)$ and an edge $u'v'$ of $G[C_1], G[C_2]$, or $G[C_0^t]$ for some $t \geq 1$ with the following properties:  
\begin{itemize}
    \item $G[X']$ is connected,
    \item $\chi(X') \geq c + 1$,
    \item $u'$ and $v'$ are earlier than every vertex in $X'$, 
    \item $v'$ has a neighbor in $X'$.
\end{itemize}

Let $D$ be a connected component of $G[X' \setminus N(v')]$ with maximum chromatic number. Since $G$ is triangle-free, we have that 
\[\chi(D) \geq \chi(X') - \chi(N(v')) \geq \chi(X') - 1 \geq c.\]
If $u'$ has a neighbor in $V(D)$, then the lemma follows by setting $u = v', v = u'$, and $X = V(D)$. Hence, we may assume that $u'$ has no neighbor in $V(D)$. Let $w$ be a neighbor of $v'$ in $X'$ that has a neighbor in $V(D)$. Note that $w$ is non-adjacent to $u'$ since $G$ is triangle-free. The lemma follows by setting $u = u', v = v'$, and $X = V(D) \cup \{w\}$.
\end{proof}

We are now ready to prove \Cref{lemma:contruction_two_paths_optimal}.

\begin{proof}[Proof of \Cref{lemma:contruction_two_paths_optimal}]
For $i \geq 0$, let $M_i$ denote the set of vertices in $C \cup \{q\}$ with $G[C \cup \{q\}]$-distance exactly $i$ from $q$. Since $G$ has odd girth at least $13$, we have that $\chi(M_0 \cup \dots \cup M_5) \leq \chi(N_G^5[q]) \leq 2$. Let $d$ be the unique integer such that $M_{d} \neq \emptyset$ and $M_{d'} = \emptyset$ for all $d' > d$.

\textbf{Finding structure in $B$ and $C$: }

For $1 \leq t \leq d$, let $\mathcal{M}^{\leq t}  = M_0 \cup M_1 \cup \dots \cup M_t$ and $\mathcal{M}^{\geq t} = M_{t} \cup M_{t + 1} \cup \dots \cup M_{d}$.
For \(6 \leq t \leq d\), partition \(B\) into three sets \(B_0^t, B_1^t, B_2^t\) as follows. Let \(B_0^t\) be the set of vertices in \(B\) with no neighbor in \(\mathcal M^{\le t-3}\). Let \(B_1^t\) be the set of vertices \(v \in B\) with a neighbor in \(\mathcal M^{\le t-3}\) for which the \(G[\mathcal M^{\le t-3} \cup \{v\}]\)-distance between \(q\) and \(v\) is odd, and let \(B_2^t\) be the set of vertices for which this distance is even.

For each $6 \le t \le d$, let $C_0^t$ be the set of vertices in
$M_t \setminus N_G^5[q]$ that have a neighbor in $B_0^t$ but no neighbor
in $B_1^t \cup B_2^t$. For $i=1,2$, let $C_i^t$ be the set of vertices
in $M_t \setminus N_G^5[q]$ that have a neighbor in $B_i^t$. Finally, set $C_i = \bigcup_{t=6}^{d} C_i^t$ for each $i \in \{0, 1, 2\}$.
Since $B$ covers $C$, it follows that $C \setminus N_G^5[q] = C_0 \cup C_1 \cup C_2$, and that $B_i^t$ covers $C_i^t$ for every $6 \le t \le d$ and $i \in \{0, 1, 2\}$.
Note that we have that $B_0^t \supseteq B_0^{t + 1}$ and $B_h^t \subseteq B_h^{t + 1}$ for all $h \in \{1, 2\}$ and $6 \leq t \leq d - 1$.

\textbf{Constructing a suitable stable grading: }

The constructed stable grading is illustrated in \Cref{fig:grading}.

\begin{figure}[ht]
    \centering
    \includegraphics[width=0.9\linewidth]{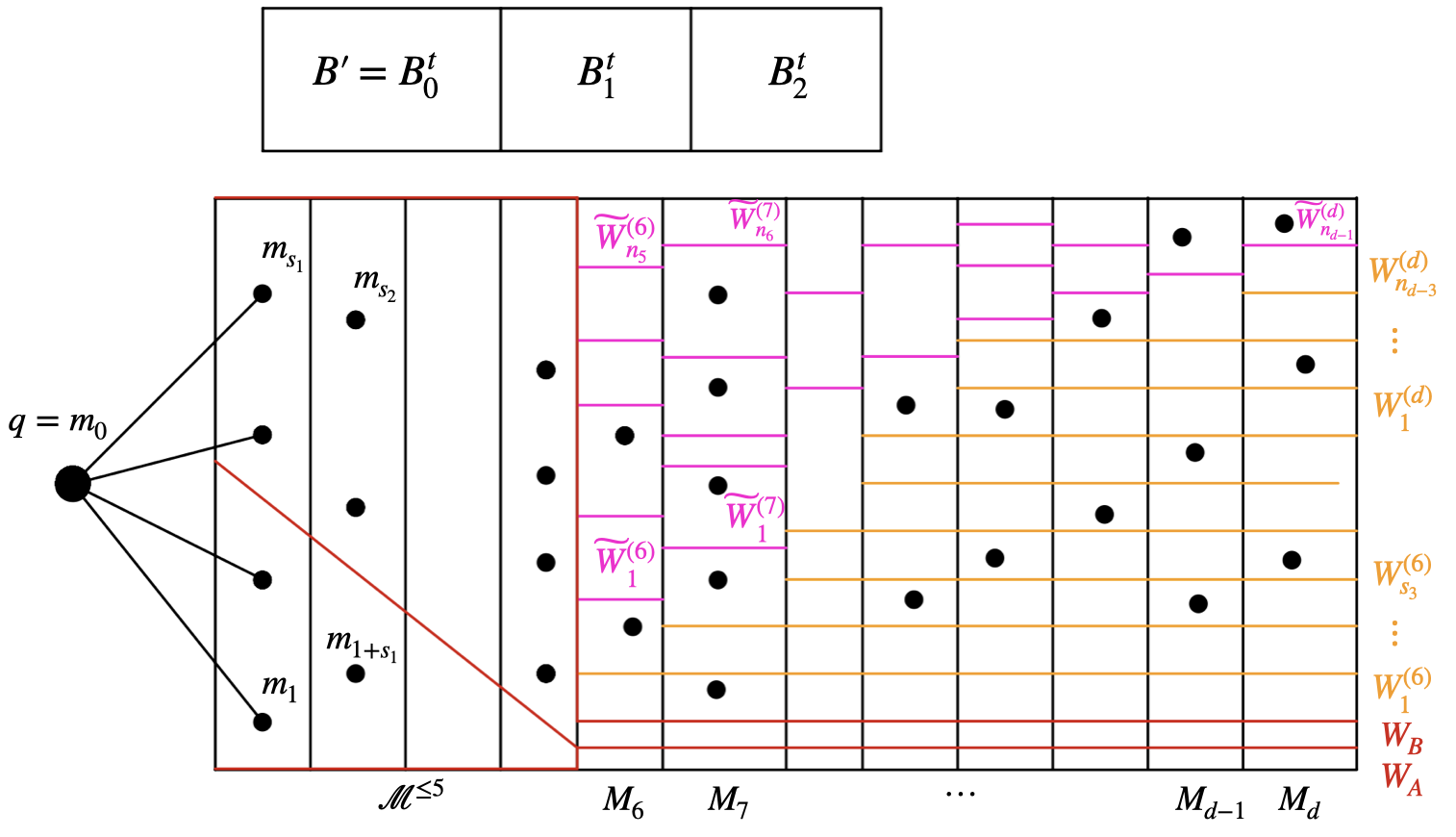}
    \caption{An illustration of the stable grading of $C$ constructed in the proof of \Cref{lemma:contruction_two_paths_optimal}. Think of it as follows: inside a level $M_i$, a vertex is earlier than another if it is in a "lower" part of the stable grading. Any $W_j$ intersecting $\mathcal{M}^{\leq t}$ for some $t$ is earlier than any $W_{j'}$ that does not intersect $\mathcal{M}^{\leq t}$.}
    \label{fig:grading}
\end{figure}
Let \(n_t = \lvert M_t \rvert\) and set \(s_t = \sum_{i = 0}^{t} n_i\). Let \(m_1, \dots, m_{s_{d - 1}}\) be an ordering of the vertices in \(\mathcal{M}^{\leq d - 1}\) such that, whenever \(1 \leq i < j \leq s_{d - 1}\), the \(G[\mathcal{M}^{\leq d -1}]\)-distance between \(q\) and \(m_i\) is at most the \(G[\mathcal{M}^{\leq d - 1}]\)-distance between \(q\) and \(m_j\).

We first construct a stable grading of $C_1 \cup C_2$. We begin with the case \( t = 6 \).
For \(1 \leq i \leq s_{3}\), let \(W_i^{(6)}\) be the set of vertices \(v\) in \(\mathcal{M}^{\geq 6}\) such that \(v\) has a neighbor in \(B_1^6 \cup B_2^6\) that is adjacent to \(m_i\), and has no neighbor in \(B_1^6 \cup B_2^6\) adjacent to any of \(m_1, \dots, m_{i - 1}\).

For \(7 \leq t \leq d\) and \(1 \leq i \leq n_{t - 3}\), let \(W_i^{(t)}\) be the set of vertices \(v\) in \(\mathcal{M}^{\geq t}\) such that \(v\) has a neighbor in \(B_1^t \cup B_2^t\) that is adjacent to \(m_{i + s_{t - 4}} \in M_{t - 3}\) and no neighbor in \(B_1^t \cup B_2^t\) that is adjacent to one of \(m_1, \dots, m_{(i -1 ) + s_{t - 4}}\).

Each set $W_i^{(t)}$ is stable since \(G\) has odd girth at least \(7\). 
Hence, ordered lexicographically by \(t\) and then by \(i\), the sets
\[
\left(W_1^{(6)}, \dots, W_{s_{3}}^{(6)}, W_1^{(7)}, \dots, W_{n_4}^{(7)}, \dots, W^{(d)}_1, \dots, W^{(d)}_{n_{d - 3}}\right)
\]
form a stable grading of \(C_1 \cup C_2\).

We now construct a stable grading of \(C_0\).
For \(6 \leq t \leq d\) and \(1 \leq i \leq n_{t - 1}\),
let \(\widetilde{W}^{(t)}_i\) be the set of vertices in \(C_0^t\) that are adjacent to \(m_{i + s_{t - 2}}\), and are non-adjacent to \(m_1, \dots, m_{(i - 1) + s_{t - 2}}\). 
Since \(G\) is triangle-free, the sets
\[
\left(\widetilde{W}^{(6)}_1, \dots, \widetilde{W}_{n_5}^{(6)}, \dots, \widetilde{W}_{1}^{(d)}, \dots, \widetilde{W}_{n_{d - 1}}^{(d)}\right)
\]
form a stable grading of \( C_0 \).

By combining these stable gradings, we get that
\[
\left(W_A, W_B, W^{(6)}_1, \dots, W_{s_{3}}^{(6)}, \widetilde{W}^{(6)}_1, \dots, \widetilde{W}^{(6)}_{n_5}, \dots,  W_1^{(d)}, \dots, W_{n_{d - 3}}^{(d)}, \widetilde{W}_1^{(d)}, \dots, \widetilde{W}_{n_{d - 1}}^{(d)} \right)
\]
is a stable grading of $C$, where $(W_A, W_B)$ is any bipartition of $C \cap N_G^5[q]$. When convenient, we regard the same sequence, with \( W_A\) and \( W_B\) omitted, as a stable grading of \( C \setminus N^5_G[q]\).

\textbf{Constructing an odd- and even-length path:}

From \Cref{lemma2:beyond_leveling} and $\chi(C \setminus N_G^5[q]) \geq \chi(C) - 2 \geq c + 9$, we get that there exists a subset $C'$ of $C \setminus N_G^5[q]$ and an edge $u'q'$ of $G[C_1], G[C_2]$ or $G[C_0^t]$, for some $t \geq 6$, with the following properties:

\begin{itemize}
    \item $G[C' \cup \{q'\}]$ is connected,
    \item $\chi(C') \geq c$,
    \item $u'$ and $q'$ are earlier than every vertex in $C'$, and
    \item $u'$ has no neighbor in $C'$.
\end{itemize}

Observe that the $G$-distance between $q$ and any vertex in $C' \cup \{u', q'\}$ is at least \(6\).
We perform a case distinction according to which of the above induced subgraphs contains the edge \(u'q'\).

\textbf{Case 1:} $u'q'$ is an edge of $G[C_0^t]$ for some $6 \leq t \leq d$.
The situation is illustrated in~\Cref{fig:case1_arithmetic}.
\begin{figure}
    \centering
    \includegraphics[width=0.9\linewidth]{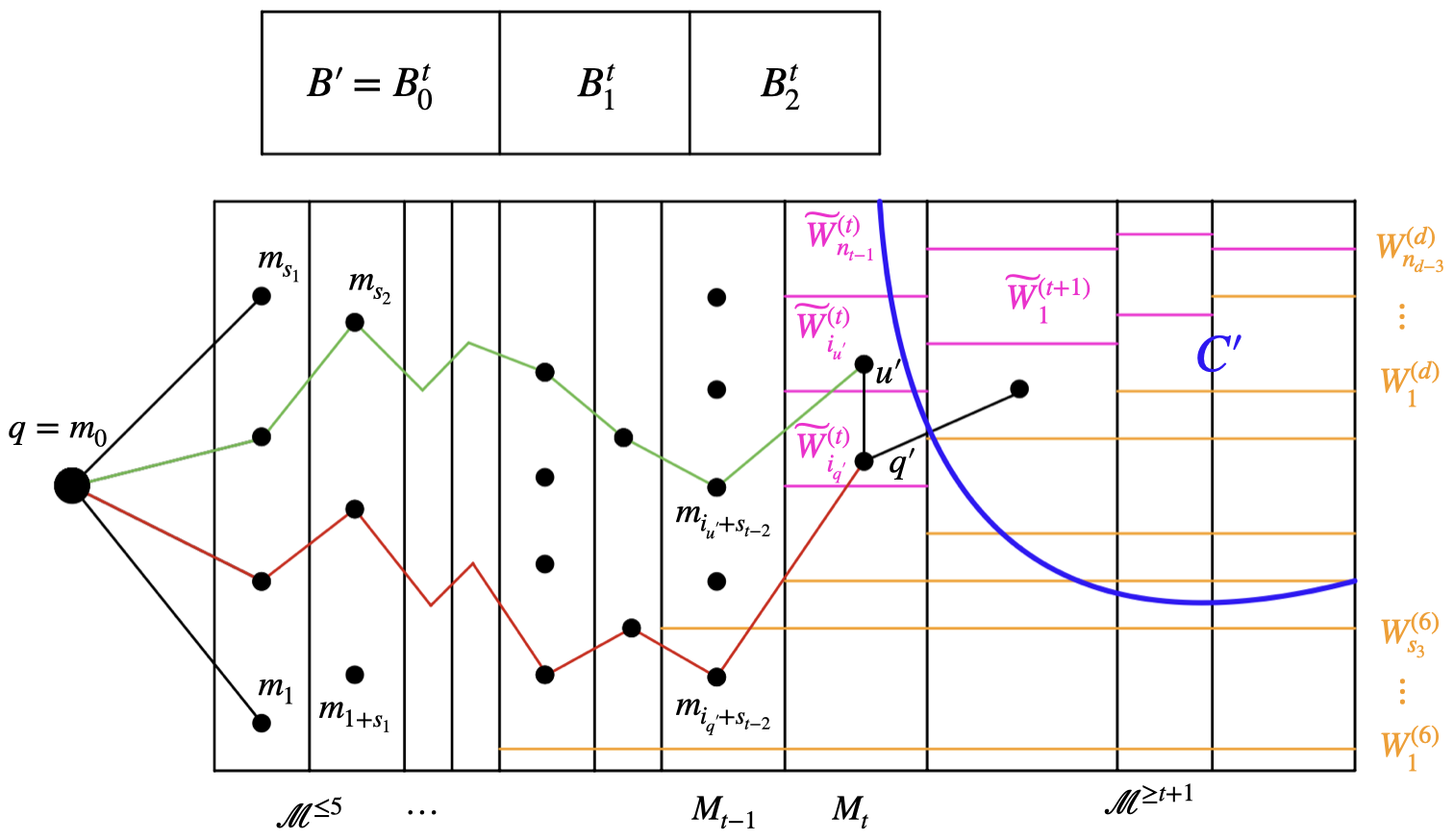}
    \caption{A possible situation in case 1 of the proof of \Cref{lemma:contruction_two_paths_optimal}.}
    \label{fig:case1_arithmetic}
\end{figure}
Let $i_{u'}$  and \(i_{q'}\) be such that $u' \in \widetilde{W}^{(t)}_{i_{u'}}$ and $q' \in \widetilde{W}^{(t)}_{i_{q'}}$.
Let $Q_{u'}$ be an induced path in $G[\mathcal{M}^{\leq t - 2} \cup \{m_{i_{u'} + s_{t - 2}}, u'\}]$ between $q$ and $u'$ of length $t$ and let $Q_{q'}$ be an induced path in $G[\mathcal{M}^{\leq t -2} \cup \{m_{i_{q'} + s_{t - 2}}, q'\}]$ between $q$ and $q'$ of length $t$. 
Let \(Q_{u'q'}\) be the path of length \(t + 1\) between $q$ and $q'$ obtained by extending $Q_{u'}$ by the edge $u'q'$. This is an induced path since $G$ is triangle-free and $\mathcal{M}^{\leq t - 2}$ is anticomplete to $M_{t}$.
Thus \(Q_{q'}\) and \(Q_{u'q'}\) are induced paths between \(q\) and \(q'\) of length \(t\) and \(t+1\), respectively. Let \(Q_1\) denote the odd-length path and let \(Q_2\) denote the even-length path.

Let $B' = B^t_0$. We again verify the remaining required properties.

\begin{itemize}
    \item Every vertex in $\mathcal{M}^{\geq t}$ with a neighbor in $B$ that is adjacent to a vertex in $\mathcal{M}^{\leq t - 3}$ is, by construction of the stable grading, earlier than $u'$ and $q'$. Hence no vertex in $C'$ has a neighbor in $B_1^t \cup B_2^t$. Since \(B\) covers \(C\), it follows that $B' = B_0^t$ covers $C'$.
    \item By definition, $B' = B_0^t$ is anticomplete to $\mathcal{M}^{\leq t - 3}$ and  $\left(V(Q_1) \cup V(Q_2) \right) \setminus N^3[q'] \subseteq \mathcal{M}^{\leq t - 3}$.
    \item We have that $V(Q_1) \cup V(Q_2) \subseteq C \cup \{q\}$.
    \item Since $u'$ and $q'$ are earlier than every vertex in $C'$, no vertex in $C'$ is adjacent to any of $m_1, \dots, m_{k}$, where $k = \max \{ i_{u'} + s_{t - 2}, i_{q'} + s_{t - 2} \}$. Since $u'$ does not have a neighbor in $C'$, $C'$ is anticomplete to $\left( V(Q_1) \cup V(Q_2)\right) \setminus \{q'\} \subseteq \{m_1, \dots, m_k\} \cup \{u'\}$.
\end{itemize}

\textbf{Case 2:} $u'q'$ is an edge of $G\left[C_h \right]$ for $h \in \{1, 2\}$.
One possible situation is illustrated in~\Cref{fig:case2_arithmetic}.
\begin{figure}
    \centering
    \includegraphics[width=0.9\linewidth]{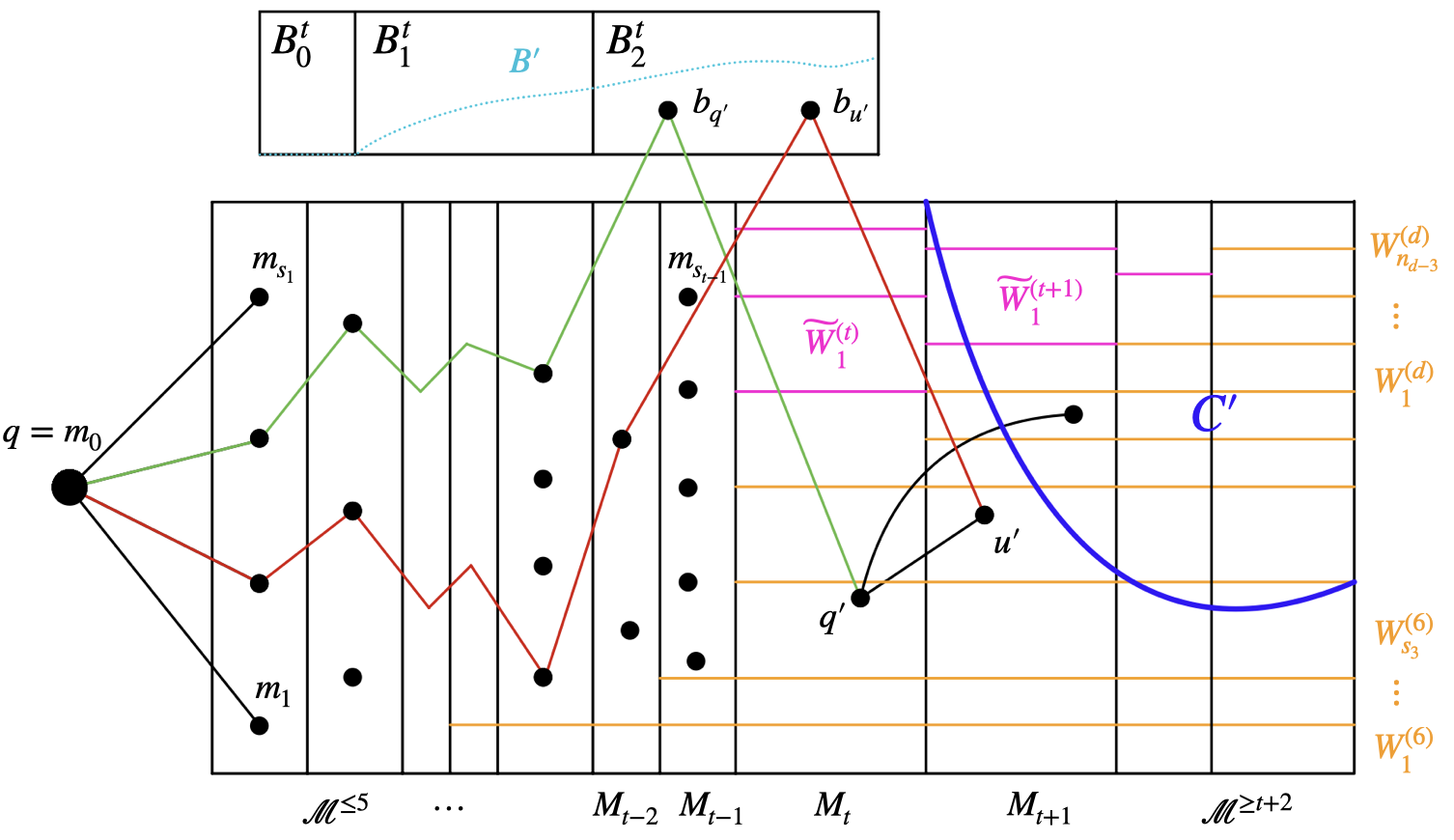}
    \caption{One possible situation in case 2 of the proof of \Cref{lemma:contruction_two_paths_optimal}}
    \label{fig:case2_arithmetic}
\end{figure}

Let $t$ be the largest integer such that $u', q' \in M_t \cup M_{t + 1}$.
Let $b_{u'}$ be a vertex in $B$ adjacent to $u'$ such that $b_{u'}$ is adjacent to $m_{\alpha_{u'}}$ and no other vertex in $B \cap N(u')$ is adjacent to any of $m_1, \dots, m_{\alpha_{u'} - 1}$, where $\alpha_{u'}$ is a minimal index.
Similarly, let \(b_{q'}\) be a vertex in $B$ adjacent to $q'$ such that $b_{q'}$ is adjacent to $m_{\alpha_{q'}}$ and no other vertex in $B \cap N(q')$ is adjacent to any of $m_1, \dots, m_{\alpha_{q'} - 1}$, where $\alpha_{q'}$ is a minimal index.

The vertices \(b_{u'}, b_{q'}\) and indices \(\alpha_{u'}, \alpha_{q'}\) are well defined since $u'$ and $q'$ are contained in $C_h$. By construction, we have that $b_{u'}, b_{q'} \in B^{t + 1}_h \supseteq B^t_h$. Note that we even have that $b_{u'}$ (or $b_{q'}$) is contained in $B^t_h$ if $u' \in M_t$ (or $q' \in M_t$).
Let $t_{u'}$ and $t_{q'}$ be such that $m_{\alpha_{u'}}$ is contained in $M_{t_{u'}}$ and $m_{\alpha_{q'}}$ is contained in $M_{t_{q'}}$.
Moreover, let \(k = \max\{ \alpha_{u'}, \alpha_{q'} \} \) and \(B' = B \setminus N(\{m_1, \dots, m_{k}\})\).

Since $G$ is triangle-free, $b_{u'}$ is non-adjacent to $q'$. Moreover, since the $G$-distance between $q$ and $C' \cup \{u', q'\}$ is at least $6$, neither $b_{u'}$ nor $b_{q'}$ is in $N^4[q]$.
Let $Q_{h}$ be an induced path in $G[\mathcal{M}^{\leq t_{u'}} \cup \{b_{u'}, u', q'\}]$ between $q$ and $q'$ obtained by extending a shortest path from $q$ to $b_{u'}$ in $G[\mathcal{M}^{\leq t_{u'}} \cup \{b_{u'}\}]$ by the edges $b_{u'}u'$ and $u'q'$. Since $u'$ is contained in $C_h$, $Q_{h}$ has odd length if $h = 1$ and even length if $h = 2$.

Similarly, let $Q_{3 - h}$ be an induced path in $G[\mathcal{M}^{\leq t_{q'}} \cup \{b_{q'}, q'\}]$ between $q$ and $q'$ obtained by extending a shortest path from $q$ to $b_{q'}$ in $G[\mathcal{M}^{\leq t_{q'}} \cup \{b_{q'}\}]$ by the edge $b_{q'}q'$. Since $q'$ is contained in $C_h$, $Q_{3 - h}$ has odd length if $3 - h = 1$ and even length if $3 - h = 2$. Thus \(Q_h\) has parity \(h\), while \(Q_{3-h}\) has parity \(3-h\). In particular, \(Q_1\) has odd length and \(Q_2\) has even length.
Note that $Q_{3 - h}$ and $Q_{h}$ are both induced paths because $\{m_1, \dots, m_k \} \subseteq \mathcal{M}^{\leq (t + 1) - 3} = \mathcal{M}^{\leq t - 2}$. Thus no vertex in $\{m_1, \dots, m_k\}$ is adjacent to a vertex in $\{u', q'\} \subseteq M_t \cup M_{t + 1}$.

Again, we verify the remaining required properties.

\begin{itemize}
    \item Since $u'$ and $q'$ are earlier than every vertex in $C'$, no vertex in $C'$ has a neighbor in $B$ that is adjacent to one of $m_1, \dots, m_k$. Hence $B'= B \setminus N(\{m_1, \dots, m_{k}\})$ covers $C'$ because $B$ covers $C$.
    \item By definition, $B'$ is anticomplete to $\{m_1, \dots, m_k\}$ and $\left( V(Q_1) \cup V(Q_2) \right) \setminus N^3[q']$ is contained in $\{ m_1, \dots, m_k\}$. Thus $B'$ is anticomplete to $\left( V(Q_1) \cup V(Q_2) \right) \setminus N^3[q']$.
    \item Since the $G$-distance between $q$ and $\{u', q'\}$ is at least 6, $b_{u'}$ and $b_{q'}$ are not contained in $N^4[q]$. Hence $\left( V(Q_1) \cup V(Q_2) \right)$ is contained in $C \cup \{q\} \cup ((B \cap N^2[q']) \setminus N^4[q])$.
    \item Since $u'$ and $q'$ are earlier than every vertex in $C'$, $C'$ is anticomplete to $\{b_{u'}, b_{q'}\}$. Since $u'$ does not have a neighbor in $C'$ and $C' \subseteq \mathcal{M}^{\geq t}$ is anticomplete to $\{m_1, \dots, m_k\} \subseteq \mathcal{M}^{\leq t- 2}$, $C'$ is anticomplete to $\left( V(Q_1) \cup V(Q_2) \right) \setminus \{q'\} \subseteq \{m_1, \dotsm m_k\} \cup \{b_{u'}, b_{q'}, u'\}.$
\end{itemize}
\end{proof}

\section{Constructing segments without even paths}
\label{sec:segments_even}

We say that $w \in V(G)$ is vertex-left-active with respect to a vertex set $X$ if there is a vertex $u \in X$ that is earlier than $w$ and adjacent to it.

\begin{lemma}\label{lemma1:quasi}
    Let $c \geq 1$ and let $G$ be a graph with $\chi(G) \geq c + 2$ and $V(G) = C_0 \cup C_1 \cup C_2$ and let $(W_1, \dots, W_n)$ be a stable grading of $G$.
    Then there exists a subset $X$ of $V(G)$ with $\chi(X) \geq c$ and either there is a vertex $q$ in $C_0 \cup C_1$ such that 
    \begin{itemize}
        \item $G[X \cup \{q\}]$ is connected, and
        \item $q$ is earlier than every vertex in $X$,
    \end{itemize}
    or there is an edge $uv$ belonging to $G[C_2]$ such that 
    \begin{itemize}
        \item $G[X]$ is connected,
        \item $u$ and $v$ are earlier than every vertex in $X$, and
        \item at least one of $u$ and $v$ have a neighbor in $X$.
    \end{itemize}
\end{lemma}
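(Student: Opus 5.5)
We mirror the proof of \Cref{lemma1:beyond_leveling}, with two kinds of activity. Let $A_1$ be the set of vertices that are vertex-left-active with respect to $C_0 \cup C_1$. Let $A_2$ be the set of vertices that are left-active with respect to $C_2$. Set $A = A_1 \cup A_2$.

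\textbf{Case 1: $\chi(A) \geq c$.} Take a connected component $D$ of $G[A]$ with $\chi(D) \geq c$, and let $w$ be a vertex of $D$ lying in the earliest part $W_i$ that meets $V(D)$. If $w \in A_1$, there is a vertex $q \in C_0 \cup C_1$ earlier than $w$ and adjacent to it. By minimality of $i$, $q$ is earlier than every vertex of $D$, so $q \notin V(D)$. Since $q$ is adjacent to $w$, $G[V(D) \cup \{q\}]$ is connected, and $X = V(D)$ gives the first outcome. If $w \in A_2$, there is an edge $uv$ of $G[C_2]$ with $u,v$ earlier than $w$ and $w$ adjacent to one of them. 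By minimality, $u$ and $v$ are earlier than all of $D$, so $X = V(D)$ gives the second outcome.

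\textbf{Case 2: $\chi(A) \leq c-1$.} Then $\chi(V(G)\setminus A) \geq 3$. We show this is impossible. Write $B_{01} = (C_0 \cup C_1)\setminus A$ and $B_2 = C_2 \setminus A$. We claim each of $B_{01}$ and $B_2$ has chromatic number at most $1$, i.e.\ is stable, or at least $2$-colorable in a way that combines to give $\chi \le 2$.

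For $B_{01}$, take any edge $xy$ inside it. Since each $W_i$ is stable, one endpoint, say $x$, is earlier. Then $y$ is vertex-left-active with respect to $C_0 \cup C_1$, so $y \in A$, a contradiction. Hence $B_{01}$ is stable.

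For $B_2$, take any odd cycle in $G[B_2]$ and orient its edges from earlier to later. There is no directed $2$-path $u\to v\to w$, since that would make $w$ left-active with respect to $C_2$. So every vertex is a source or a sink, contradicting oddness. Hence $G[B_2]$ is bipartite.

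This only yields $\chi(V(G)\setminus A) \leq 1 + 2 = 3$, which is one too many. To sharpen it, use a single ordered argument on $V(G) \setminus A$. Take an odd cycle $D$ in $G[V(G)\setminus A]$ and orient it as before. Any edge with later endpoint $y \in C_0 \cup C_1$ and earlier endpoint in $C_0\cup C_1$ is excluded, since $y$ would then be active. More relevant is the following: whenever the earlier endpoint of an edge lies in $C_0 \cup C_1$, the later endpoint is vertex-left-active, hence lies in $A$. So in $D$, every vertex of $C_0\cup C_1$ is a sink. Every directed $2$-path $u\to v\to w$ therefore has $u,v\in C_2$, which makes $w$ left-active with respect to $C_2$, a contradiction. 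So every vertex of $D$ is a source or a sink, and $D$ is bipartite, contradicting that it is an odd cycle. Thus $\chi(V(G)\setminus A)\le 2$, contradicting $\chi(G)\ge c+2$.

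The main obstacle is exactly this joint orientation argument, which replaces the additive bound $1 + 2$ by $2$. It is worth double-checking the claim that every vertex of $C_0 \cup C_1$ is a sink. This fails only if a vertex is a source toward some later vertex. Vertex-left-activity here refers to adjacency to any earlier vertex of $C_0\cup C_1$, regardless of where the later vertex lives, so such a later neighbor lands in $A$. Hence the claim holds.
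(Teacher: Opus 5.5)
Your proof is correct and is essentially the paper's argument. It uses the same set $A$ (vertex-left-active with respect to $C_0\cup C_1$ or left-active with respect to $C_2$), the same earliest-part argument in Case~1, and in Case~2 the same single orientation of an odd cycle in $G[V(G)\setminus A]$, where every vertex of $C_0\cup C_1$ is forced to be a sink, so any directed $2$-path $u\to v\to w$ would have $u,v\in C_2$. Your preliminary separate bound via $B_{01}$ and $B_2$ is not needed for the final argument and can simply be dropped; the same is true of the paper's own step showing $B_0\cup B_1$ is stable.
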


\begin{proof}
    Let $A$ be the set of vertices that are vertex-left-active with respect to $C_0 \cup C_1$ or left-active with respect to $C_2$.

    \textbf{Case 1:} $\chi(A) \geq c$.

    Let $D$ be a connected component of $G[A]$ with maximum chromatic number. Then there is a minimal integer $i$ such that $1 \leq i \leq n$ and $W_i \cap V(D)$ is non-empty. Let $w$ be a vertex in $W_i \cap V(D)$. Then there is either a vertex $q \in C_0 \cup C_1$ or an edge $uv$ of $G[C_2]$ for which the required properties are satisfied with $X = V(D)$.

    \textbf{Case 2:} $\chi(A) \leq c - 1$. 

    Let $B_h := C_h \setminus A$ for $h \in \{0, 1, 2\}$ and let $B = B_0 \cup B_1 \cup B_2$. We have that 
    \[\chi(B) \geq \chi(G) - \chi(A) = c+ 2 - (c - 1) = 3.\]

    Suppose that $\chi(B_0 \cup B_1) \geq 2$. Then, there is an edge $qq'$ in $G[B_0 \cup B_1]$ with $q \in W_{\ell}$ and $q' \in W_{\ell'}$ for some $1 \leq \ell, \ell' \leq n$ with $\ell \neq \ell'$. However, then either $q$ or $q'$ is vertex-left-active for $G[C_0 \cup C_1]$, which contradicts $q, q' \not \in A$. Hence we may assume that $B_0 \cup B_1$ is a stable set.
    
    Consider an odd cycle $D$ in $G[B]$ which exists because $\chi(B) \geq 3$. As in the proof of \Cref{lemma1:beyond_leveling}, orient each edge from the earlier endpoint to the later endpoint, and denote the resulting orientation by \(\vec{D}\).

    Observe that every vertex in $V(D) \cap (B_0 \cup B_1)$ is a sink in \(\vec{D}\), as otherwise one of its neighbors would be vertex-left-active with respect to $B_0 \cup B_1$. Moreover, since no vertex in $D$ is left-active with respect to $C_2$, there is no directed path of length $2$ in \(\vec{D}\). Indeed, if \(u \to v \to w\) is such a path, then $uv$ is an edge of $G[C_2]$ since neither $u$ nor $v$ is a sink, both $u$ and $v$ are earlier than $w$, and $w$ is adjacent to $v$, contradicting $w \in B_2$. Hence every vertex of $\vec{D}$ is either a source or a sink. Thus the sources and sinks form a bipartition of $D$, contradicting that $D$ is an odd cycle.
\end{proof}

\begin{lemma}\label{lemma2:quasi}
    Let $c \geq 1$ and let $G$ be a triangle-free graph with $\chi(G) \geq c + 3$ and $V(G) = C_0 \cup C_1 \cup C_2$ and let $(W_1, \dots, W_n)$ be a stable grading of $G$.

    Then there exists a subset $X$ of $V(G)$ with $\chi(X) \geq c$ and either there is a vertex $q$ in $C_0 \cup C_1$ such that 
    \begin{itemize}
        \item $G[X \cup \{q\}]$ is connected, and
        \item $q$ is earlier than every vertex in $X$,
    \end{itemize}
    or there is an edge $uv$ in $G[C_2]$ such that 
    \begin{itemize}
        \item $G[X]$ is connected,
        \item $u$ and $v$ are earlier than every vertex in $X$,
        \item $u$ has no neighbor in $X$, and
        \item $v$ has a neighbor in $X$.
    \end{itemize}
\end{lemma}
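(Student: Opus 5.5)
The plan is to mirror the derivation of \Cref{lemma2:beyond_leveling} from \Cref{lemma1:beyond_leveling}. Since $\chi(G) \geq (c+1) + 2$, \Cref{lemma1:quasi} applied with $c+1$ in place of $c$ yields a set $X'$ with $\chi(X') \geq c+1$. It also yields one of two alternatives, and we handle them in turn.

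\textbf{Vertex case.} Here there is a vertex $q \in C_0 \cup C_1$ earlier than every vertex of $X'$ with $G[X' \cup \{q\}]$ connected. We may simply take a connected component $X$ of $G[X']$ of maximum chromatic number that contains a neighbor of $q$; but that component need not have large chromatic number. To avoid this, it is cleaner to keep $X = X'$ itself, since $\chi(X') \geq c+1 \geq c$ and the first alternative of the statement does not require $G[X]$ to be connected. Thus $X = X'$ and $q$ satisfy the first alternative directly.

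\textbf{Edge case.} Here there is an edge $u'v'$ of $G[C_2]$ with $u', v'$ earlier than all of $X'$, $G[X']$ connected, and say $v'$ having a neighbor in $X'$. Let $D$ be a connected component of $G[X' \setminus N(v')]$ of maximum chromatic number. Since $G$ is triangle-free, $N(v')$ is stable, so $\chi(D) \geq \chi(X') - 1 \geq c$. We then split according to whether $u'$ reaches $D$.
\begin{itemize}
\item If $u'$ has a neighbor in $V(D)$, we take $u = v'$, $v = u'$ and $X = V(D)$. Indeed, $v'$ has no neighbor in $D$ by construction.
\item Otherwise, since $G[X']$ is connected and $D \neq X'$ (as $v'$ has a neighbor in $X'$ outside $D$), some $w \in X' \cap N(v')$ has a neighbor in $V(D)$. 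By triangle-freeness, $w$ is not adjacent to $u'$. We take $u = u'$, $v = v'$ and $X = V(D) \cup \{w\}$. This set is connected, has $\chi \geq c$, and lies in $X'$, so $u, v$ are earlier than all of it. Finally, $u$ has no neighbor in $X$ and $v$ is adjacent to $w$.
\end{itemize}
The degenerate possibility $X' \subseteq N(v')$ cannot occur, because $\chi(X') \geq c+1 \geq 2$ while $N(v')$ is stable.

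The only subtlety, and the step I expect to need care, is the connectivity requirement in the vertex alternative: we need that $G[X \cup \{q\}]$ is connected, not $G[X]$. Keeping $X = X'$ unchanged handles it, since \Cref{lemma1:quasi} already provides this connectivity. Hence the extra $+1$ in the chromatic bound is spent only in the edge case, exactly as in \Cref{lemma2:beyond_leveling}.
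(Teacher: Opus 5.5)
Your proof is correct and is exactly the argument the paper intends: the paper only says the proof is analogous to that of Lemma~\ref{lemma2:beyond_leveling}, which amounts to applying Lemma~\ref{lemma1:quasi} with $c+1$, keeping $X = X'$ in the vertex alternative, and handling the edge alternative as you do. Your choice $X = X'$ is right, and the aborted first sentence of your vertex-case paragraph can simply be dropped. In the edge alternative you delete the stable set $N(v')$ and split on whether $u'$ has a neighbor in the remaining component, which is precisely that argument.
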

The proof of \Cref{lemma2:quasi} is completely analogous to the proof of \Cref{lemma2:beyond_leveling}, which is why we refrain from repeating it.

We are now ready to prove \Cref{lemma:construction_quasi}.

\begin{proof}[Proof of \Cref{lemma:construction_quasi}]
    For $i \geq 0$, let $M_i$ be the vertices in $C \cup \{q\}$ with $G\left[ C \cup \{q\}\right]$-distance exactly $i$ from $q$. Since $G$ has odd girth at least 15, we have that $\chi(M_0 \cup \dots \cup M_6) \leq \chi(N_G^6[q]) \leq 2$. Let $d$ be the unique integer such that $M_d \neq \emptyset$ and $M_{d'} = \emptyset$ for all $d' > d$.
    For $1 \leq t \leq d$, let $\mathcal{M}^{\leq t} = M_0 \cup \dots \cup M_t$ and $\mathcal{M}^{\geq t} = M_t \cup \dots \cup M_d$.
    For $6 \leq t \leq d$, let $B_0, B_1, B_2, B_0^t, B_1^t, B_2^t$, $C_0, C_1, C_2, C_0^t, C_1^t, C_2^t$, and the stable grading
    \begin{align}\label{eq:stable_grading1}
        \left(W^{(6)}_1, \dots, W_{n_{3}}^{(6)}, \widetilde{W}^{(6)}_1, \dots, \widetilde{W}^{(6)}_{n_5}, \dots,  W_1^{(d)}\dots, W_{n_{d - 3}}^{(d)}, \widetilde{W}_1^{(d)}, \dots, \widetilde{W}_{n_{d - 1}}^{(d)} \right)
    \end{align}
    of $C \setminus N^5_G[q]$ be as in the proof of \Cref{lemma:contruction_two_paths_optimal}.
    Moreover, by excluding $N^6(q)$ from each part of the stable grading, we get a stable grading of $C \setminus  N_G^6[q]$. By \Cref{lemma2:quasi}, there is a subset $\widehat{C}$ of $C \setminus N_G^6[q]$ with $\chi(\widehat{C}) \geq c$ and there is either a vertex $\hat{q}$ in $C_0 \cup C_2$ such that 
    \begin{itemize}
        \item $G[\widehat{C} \cup \{\hat{q}\}]$ is connected, and
        \item ${\hat{q}}$ is earlier than every vertex in $\widehat{C}$,
    \end{itemize}
    or an edge $u'q'$ belonging to $G[C_1]$ such that 
    \begin{itemize}
        \item $G[\widehat{C} \cup \{q'\}]$ is connected,
        \item $u'$ and $q'$ are earlier than every vertex in $\widehat{C}$, and 
        \item $u'$ has no neighbor in $\widehat{C}$.
    \end{itemize}
Given such an edge $u'q'$ in $G[C_1]$, we can find an odd- and even-length induced path with the desired properties for $C' = \widehat{C}$ as in the second case in the proof of \Cref{lemma:contruction_two_paths_optimal}.
Hence we may assume that there is a vertex $\hat{q}$ in $C_0 \cup C_2$ with the above properties. If $\hat{q} \in C_2$ or $\hat{q} \in C_0 \cap M_t$ for an odd $t$, we can construct an induced path of odd length as in first or second case in the proof of \Cref{lemma:contruction_two_paths_optimal}. 

Therefore, it suffices to consider the case where $\hat{q} \in C_0 \cap M_t$ for an even $t$ with $7 \leq t \leq d$. Let $i_{\hat{q}}$ be such that $\hat{q} \in \widetilde{W}^{(t)}_{i_{\hat{q}}}$ and let $\hat{u} = m_{i_{\hat{q}} + s_{t - 2}}$. 
Note that the $G$-distance between $q$ and $\widehat{C} \cup \{\hat{q}\}$ is at least 7, so $\hat{u} \in M_{t - 1}$ has $G$-distance at least 6 to $q$. In particular, $\hat{u}$ is earlier than $\hat{q}$ since $\hat{q} \in C_0$. Thus $\hat{u}$ is also earlier than every vertex in $\widehat{C}$.

First we consider the case where $\hat{u} \in C_0$. Let $i_{\hat{u}}$ be such that $\hat{u} \in \widetilde{W}^{(t - 1)}_{i_{\hat{u}}}$. 
Let $Q_1$ be an induced path in $G[\mathcal{M}^{\leq t - 3} \cup \{ m_{i_{\hat{u}} + s_{t - 3}}, \hat{u} \}]$ between $q$ and $\hat{u}$. Since $t$ is even and $\hat{u}\in M_{t - 1}$, $Q_1$ has odd length. Let $q' = \hat{u}$, $C' = \widehat{C} \cup \{\hat{q} \}$, and $B' = B_0^{t}$. We verify the required properties.

\begin{itemize}
    \item Since $\hat{q}$ is earlier than every vertex in $\widehat{C}$ and $\hat{q} \in M_t \cap C_0$, we have that $C' \subseteq \mathcal{M}^{\geq t}$. Every vertex in $\mathcal{M}^{\geq t}$ with a neighbor in $B$ that is adjacent to a vertex in $\mathcal{M}^{\leq t- 3}$ is earlier than $\hat{q}$ by construction of the stable grading. Hence no vertex in $C'$ has a neighbor in $B^t_1 \cup B^t_2$, and $B' = B^t_0$ covers $C'$ since $B$ covers $C$.
    \item By definition, $B' = B^t_0$ is anticomplete to $\mathcal{M}^{\leq t - 3}$ and is thus also anticomplete to $V(Q_1) \setminus N^3[q'] \subseteq \mathcal{M}^{\leq t - 3}$. 
    \item We have that $V(Q_1) \subseteq C \cup \{q\}$.
    \item Since $\hat{q}$ is earlier than every other vertex in $C'$, no vertex in $C'$ is adjacent to any of $m_1, \dots, m_{i_{\hat{q}} -1 + s_{t - 2}}$. Hence $C'$ is anticomplete to $V(Q_1) \setminus \{q'\} = V(Q_1) \setminus \{m_{i_{\hat{q}} + s_{t - 2}}\}$.
\end{itemize}

Now consider the case where $\hat{u} \in C_h$ for $h \in \{1, 2\}$.
Let $b_{\hat{u}}$ be a vertex in $B$ adjacent to $\hat{u}$ such that $b_{\hat{u}}$ is adjacent to $m_{\alpha_{\hat{u}}}$ and no other vertex in $B \cap N(\hat{u})$ is a adjacent to any of $m_1, \dots, m_{\alpha_{\hat{u}} - 1}$, where $\alpha_{\hat{u}}$ is a minimal index.
By construction, we have that $b_{\hat{u}} \in B^t_h$. Let $t_{\hat{u}}$ be such that $m_{\alpha_{\hat{u}}}$ is contained in $M_{t_{\hat{u}}}$ and let $B' = B \setminus N(\{ m_1, \dots, m_{\alpha_{\hat{u}}} \})$. 

We first consider $h = 1$.
Let $Q_1$ be an induced path between $q$ and $\hat{q}$ in $G[\mathcal{M}^{\leq t_{\hat{u}}} \cup \{b_{\hat{u}}, \hat{u}, \hat{q}\}]$ obtained by extending a shortest path from $q$ to $b_{\hat{u}}$ in $G[ \mathcal{M}^{\leq t_{\hat{u}}} \cup \{b_{\hat{u}}\}]$ by the edges $b_{\hat{u}}\hat{u}$ and $\hat{u}\hat{q}$. Since $\hat{u} \in C_1$, this path has odd length. Let $q' = \hat{q}$ and $C' = \widehat{C}$. We verify the required properties.

\begin{itemize}
    \item No vertex in $C'$ has a neighbor in $B$ that is adjacent to one of $\{m_1, \dots, m_{\alpha_{\hat{u}}}\}$ since $\hat{u}$ is earlier than every vertex in $C'$. Thus $B'$ covers $C'$ because $B$ covers $C$.
    \item Since $V(Q_1) \setminus N^3[q']$ is contained in $\{m_1, \dots, m_{\alpha_{\hat{u}}}\}$ and $B'$ is anticomplete to $\{ m_1, \dots, m_{\alpha_{\hat{u}}} \}$, $B'$ is anticomplete to $V(Q_1) \setminus N^3[q']$.
    \item Since the $G$-distance between $q$ and $q' = \hat{q}$ is at least 7, $b_{\hat{u}}$ has $G$-distance at least 5 to $q$. Hence $V(Q_1)$ is contained in $C \cup \{q\} \cup (( B \cap N^2[q'] ) \setminus N^4[q])$ because $V(Q_1) \cap B = \{b_{\hat{u}}\}$.
    \item Since $q'$ is earlier than every vertex in $C'$, $\hat{u} = m_{i_{\hat{q}} + s_{t -2}}$ does not have a neighbor in $C'$. Since $\hat{u}$ is earlier than every vertex in $C'$, no vertex in $C'$ is adjacent to $b_{\hat{u}}$. Because $C' \subseteq \mathcal{M}^{\geq t}$ is anticomplete to $\mathcal{M}^{\leq t- 2}$, we have that $C'$ is anticomplete to $V(Q_1) \setminus \{q'\}$.
\end{itemize}

Finally, consider $h = 2$. Let $Q_1$ be an induced path in $G[\mathcal{M}^{\leq t_{\hat{u}}} \cup \{b_{\hat{u}}, \hat{u}\}]$ between $q$ and $\hat{u}$ obtained by extending a shortest path between $q$ and $b_{\hat{u}}$ in $G[\mathcal{M}^{\leq t_{\hat{u}}} \cup \{b_{\hat{u}}\}]$ by the edge $b_{\hat{u}}\hat{u}$. Since $\hat{u} \in C_2$, $Q_1$ has odd length. Let $q' = \hat{u}$ and let $C' = \widehat{C} \cup \{ \hat{q} \}$. The desired properties follow analogously as for the case $h = 1$.

\begin{itemize}
    \item No vertex in $C'$ has a neighbor in $B$ that is adjacent to one of $\{m_1, \dots, m_{\alpha_{\hat{u}}}\}$ since $\hat{u}$ is earlier than every vertex in $C'$. Thus $B'$ covers $C'$ because $B$ covers $C$.
    \item Since $V(Q_1) \setminus N^3[q']$ is contained in $\{m_1, \dots, m_{\alpha_{\hat{u}}}\}$ and $B'$ is anticomplete to $\{ m_1, \dots, m_{\alpha_{\hat{u}}} \}$, $B'$ is anticomplete to $V(Q_1) \setminus N^3[q']$.
    \item Since the $G$-distance between $q$ and $q' = \hat{u}$ is at least 6, $b_{\hat{u}}$ has $G$-distance at least 5 to $q$. Hence $V(Q_1)$ is contained in $C \cup \{q\} \cup (( B \cap N^2[q'] ) \setminus N^4[q])$ because $V(Q_1) \cap B = \{b_{\hat{u}}\}$.
    \item Since $q' = \hat{u}$ is earlier than every vertex in $C'$, no vertex in $C'$ is adjacent to $b_{\hat{u}}$. Because $C' \subseteq \mathcal{M}^{\geq t}$ is anticomplete to $\mathcal{M}^{\leq t - 2}$, we have that $C'$ is anticomplete to $V(Q_1) \setminus \{q'\}$.
\end{itemize}
\end{proof}

\bibliographystyle{plain}
\bibliography{sources}

\end{document}